\definecolor{darkblue}{rgb}{0,0,.5}
\numberwithin{equation}{section}
\newtheorem{thm}{Theorem}[section]
\newtheorem{lm}[thm]{Lemma} %%[section]
\theoremstyle{remark}
\newtheorem{ex}[thm]{Example}%[section]
\newtheorem{rmk}[thm]{Remark}%[section]
\newtheorem{conj}[thm]{Conjecture}%[section]
\theoremstyle{definition}
\newcommand{\gt}{\mathfrak}
\newcommand{\SL}{{\rm SL}}
\newcommand{\GL}{{\rm GL}}
\newcommand{\Spin}{{\rm Spin}}
\newcommand{\ind}{{\rm ind\,}}
\newcommand{\rk}{\mathrm{rk\,}}
\newcommand{\Lie}{\mathrm{Lie\,}}
\newcommand{\Ann}{\mathrm{Ann}}
\newcommand{\ad}{\mathrm{ad}}
\newcommand {\cj}{{\mathcal J}}
\newcommand{\esi}{\varepsilon}
\newcommand{\g}{\tilde{\mathfrak g}}
\newcommand {\cS}{{\mathcal S}}
\newcommand{\C}{{\mathbb C}}
\newcommand {\Z}{{\mathbb Z}}
\renewcommand{\le}{\leqslant}
\renewcommand{\ge}{\geqslant}
\font\euszw=eusm10 scaled 1200%
\font\eusac=eusm7 scaled 1200%
\font\eusacc=eusm7 scaled 1000%
\begin{document}
\hfill {\scriptsize  December 22, 2017} %%\today}
\vskip1ex

\title[Semi-direct products with free algebras of invariants]
{Some semi-direct products \\ with free algebras of symmetric invariants}
\author{Oksana Yakimova}
\address{Institut f\"ur Mathematik, Friedrich-Schiller-Universit\"at Jena, Jena, 07737, Deutschland}
\email{oksana.yakimova@uni-jena.de}
\thanks{The author is partially supported by the DFG priority programme SPP 1388 ``Darstellungstheorie" and 
by the Graduiertenkolleg GRK 1523 ``Quanten- und Gravitationsfelder".}
\subjclass[2010]{14L30, 17B45}
\maketitle
\maketitle

%%\tableofcontents

%%%%%%%%%%%%%%%%%%%%%%%%%%%%%%%%
\begin{abstract}
Let $\gt g$ be a  complex reductive  Lie algebra
and $V$ the underling vector space of a finite-dimensional representation of $\gt g$.
Then  one can consider a new Lie algebra $\gt q=\gt g{\ltimes} V$, which is a semi-direct product 
of $\gt g$ and an Abelian ideal $V$. We outline several results on the algebra 
$\C[\gt q^*]^{\gt q}$ of symmetric invariants of $\gt q$ and describe all semi-direct 
products related to the defining representation of $\gt{sl}_n$ with
%%% classify all $\gt q$ of this form such that 
$\C[\gt q^*]^{\gt q}$ being a free algebra.  
\end{abstract}

\section{Introduction}

Let $Q$ be a connected complex algebraic group.
Set $\gt q=\Lie Q$. Then $\cS(\gt q)=\C[\gt q^*]$
and $\cS(\gt q)^{\gt q}=\C[\gt q^*]^{\gt q}=\C[\gt q^*]^Q$.  
We will call the latter object the {\it algebra of symmetric invariants} of $\gt q$.
An important property of $\cS(\gt q)^{\gt q}$ is that it is isomorphic to $Z{\bf U}(\gt q)$
as an algebra by a classical result of M.\,Duflo (here $Z{\bf U}(\gt q)$ is the centre of 
the universal enveloping algebra of $\gt q$).  

Let $\gt g$ be a reductive Lie algebra. Then
by the Chevalley restriction theorem 
$\cS(\gt g)^{\gt g}=\C[H_1,\ldots,H_{\rk\gt g}]$ is a  polynomial ring 
(in $\rk\gt g$ variables). 
%% with homogeneous generators. 
A quest for non-reductive Lie algebras with a similar property has recently become a 
trend in invariant theory. Here we consider finite-dimensional representations 
$\rho: \gt g \to \gt{gl}(V)$  of $\gt g$ and the corresponding semi-direct products 
$\gt q=\gt g{\ltimes} V$. %%%%whith $V$. %%%% being an Abelian ideal. 
The Lie bracket on $\gt q$ is defined by
\begin{equation}\label{Lie-s}
[\xi+v,\eta+u]=[\xi,\eta]+\rho(\xi)u-\rho(\eta)v 
\end{equation}
for all $\xi,\eta\in\gt g$, $v,u\in V$. 
Let $G$ be a connected simply connected Lie group with $\Lie G=\gt g$. Then 
$\gt q=\Lie Q$ with $Q=G{\ltimes}\exp(V)$. 
 
It is easy to see that $\C[V^*]^G\subset \C[\gt q^*]^{\gt q}$ and therefore 
$\C[V^*]^G$ must be a polynomial ring if $\C[\gt q^*]^{\gt q}$ is, see \cite[Section~3]{z2}. 
Classification of the  representations of complex simple algebraic groups with 
free algebras of invariants was carried out
by G.\,Schwarz~\cite{gerry}
and  independently by  O.M.\,Adamovich and E.O.\,Golovina~\cite{devochki}. 
One such representation is the spin-representation of $\Spin_7$, which leads to
$Q=\Spin_7{\ltimes}\C^8$.
Here $\C[\gt q^*]^{\gt q}$ is a polynomial ring in $3$ variables generated by invariants 
of bi-degrees $(0,2)$, $(2,2)$, $(6,4)$ with respect to the decomposition 
$\gt q=\gt{so}_7{\oplus} \C^8$, see \cite[Proposition~3.10]{z2}. 

In this paper, we treat another example, $G=\SL_n$, $V=m(\C^n)^*{\oplus}k\C^n$ 
with $n\ge 2$, $m\ge 1$, $m\ge k$. Here $\C[\gt q^*]^{\gt q}$ is a polynomial ring 
in exactly the following three cases:
\begin{itemize}
\item $k=0$, $m\le n{+}1$, and $n \equiv t \pmod{m}$ with $t\in\{-1,0,1\}$; 
\item $m=k$, $k\in\{n{-}2,n{-}1\}$;
\item $n\ge m>k>0$ and $m{-}k$ divides $n{-}m$.   
\end{itemize}

We also briefly discuss semi-direct products arising as $\mathbb Z_2$-contractions of reductive Lie algebras. 

\section{Symmetric invariants  and generic stabilisers}\label{sec-2}

Let $\gt q=\Lie Q$ be an algebraic Lie algebra, $Q$ a connected algebraic group. 
The index  of $\gt q$ is defined as 
$$\ind\gt q=\min_{\gamma\in\gt q^*} \dim\gt q_\gamma, $$ 
where $\gt q_\gamma$ is the stabiliser of $\gamma$ in $\gt q$. 
In view of Rosenlicht's theorem, $\ind\gt q={\rm tr.deg}\,\C(\gt q^*)^Q$.
In case $\ind\gt q=0$,  we have $\C[\gt q^*]^{\gt q}=\C$.
For a reductive $\gt g$, $\ind\gt g=\rk\gt g$. Recall that $(\dim\gt g+\rk\gt g)/2$ is the dimension 
of a Borel subalgebra of $\gt g$. For $\gt q$, set ${\bf b}(\gt q):=(\ind\gt q+\dim\gt q)/2$. 

Let $\{\xi_i\}$ be a basis of $\gt q$ and ${\mathcal M}(\gt q)=([\xi_i,\xi_j])$ the structural matrix with entries in $\gt q$. 
This is a  skew-symmetric matrix of rank $\dim\gt q-\ind\gt q$. Let us take Pfaffians of the principal 
minors of  ${\mathcal M}(\gt q)$ of size $\rk {\mathcal M}(\gt q)$ and let ${\bf p}={\bf p}_{\gt q}$
be their greatest common divisor. Then   ${\bf p}$ is called the
 {\it fundamental semi-invariant} of $\gt q$.
The zero set of ${\bf p}$ is the maximal divisor in the so called {\it singular set}
$$
\gt q^*_{\rm sing}=\{\gamma\in\gt q^* \mid \dim\gt q_\gamma>\ind \gt q\},
$$
of $\gt q$. 
Since $\gt q^*_{\rm sing}$ is clearly a $Q$-stable subset, ${\bf p}$ is indeed a semi-invariant,
$Q{\cdot}{\bf p}\subset \C{\bf p}$. 
One says that $\gt q$ has 
the ``codim-2" property (satisfies the ``codim-2" condition), if 
$\dim\gt q ^*_{\rm sing}\le \dim\gt q-2$ or equivalently if ${\bf p}=1$.

Suppose that $F_1,\ldots,F_r\in\cS(\gt q)$ are homogenous algebraically independent 
polynomials. The {\it Jacobian locus} ${\cj}(F_1,\ldots,F_r)$ of these polynomials consists of all
$\gamma\in \gt q^*$  such that the differentials 
$d_\gamma F_1,\ldots,d_\gamma F_r$ are linearly dependent. 
In other words, $\gamma\in {\cj}(F_1,\ldots,F_r)$ if and only if 
$(dF_1\wedge\ldots\wedge dF_r)_\gamma=0$.
The set ${\mathcal J}(F_1,\ldots,F_r)$ is a proper Zariski closed subset of $\gt q^*$.
Suppose that ${\cj}(F_1,\ldots,F_r)$ does not contain divisors.
Then by the characteristic zero version of 
a result of Skryabin, see \cite[Theorem~1.1]{ppy}, $\C[F_1,\ldots,F_r]$ is an 
algebraically closed subalgebra of $\cS(\gt q)$, each  $H\in \cS(\gt q)$ that is 
algebraic over $\C(F_1,\ldots,F_r)$ is contained in
$\C[F_1,\ldots,F_r]$.

\begin{thm}[cf. {\cite[Section~5.8]{JSh}}]\label{thm-sum}
Suppose that 
${\bf p}_{\gt q}=1$ and suppose that 
$H_1,\ldots,H_r\in\cS(\gt q)^{\gt q}$ are homogeneous algebraically independent 
polynomials such that $r=\ind\gt q$ and $\sum_{i=1}^r \deg H_i={\bf b}(\gt q)$.
Then  $\cS(\gt q)^{\gt q}=\C[H_1,\ldots,H_r]$ is a polynomial ring in $r$ generators. 
\end{thm}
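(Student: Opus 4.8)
The plan is to show that the inclusion $\C[H_1,\ldots,H_r]\subseteq\cS(\gt q)^{\gt q}$ is an equality by combining the Jacobian/Skryabin criterion quoted above with a degree (or divisor) estimate forced by the hypothesis $\sum\deg H_i={\bf b}(\gt q)$. First I would observe that since the $H_i$ are algebraically independent and $r=\ind\gt q$, Rosenlicht's theorem gives $\C(H_1,\ldots,H_r)=\C(\gt q^*)^Q$, so every $Q$-invariant in $\cS(\gt q)$ is algebraic over $\C(H_1,\ldots,H_r)$. By the characteristic-zero Skryabin result cited in the excerpt, it therefore suffices to prove that the Jacobian locus $\cj(H_1,\ldots,H_r)$ contains no divisor; then $\C[H_1,\ldots,H_r]$ is algebraically closed in $\cS(\gt q)$, hence contains all of $\cS(\gt q)^{\gt q}$, and we are done.

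So the whole argument reduces to controlling the divisorial part of $\cj:=\cj(H_1,\ldots,H_r)$. The key step is to relate $\cj$ to the singular set $\gt q^*_{\rm sing}$. For a generic $\gamma$ with $\dim\gt q_\gamma=\ind\gt q=r$, the differentials $d_\gamma H_1,\ldots,d_\gamma H_r$ lie in $\gt q_\gamma$ (because the $H_i$ are invariants, their differentials at $\gamma$ are $\ad^*$-fixed, i.e. annihilate $[\gt q,\gamma]$), and one checks they span it; hence away from $\gt q^*_{\rm sing}$ the form $(dH_1\wedge\cdots\wedge dH_r)_\gamma$ vanishes exactly on the locus where these $r$ vectors fail to be independent. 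The standard device (going back to the Jacobian-wedge arguments in Ra\"is--Tauvel type computations, and used in \cite{ppy}) is this: write $\omega=dH_1\wedge\cdots\wedge dH_r$ as a section of $\Lambda^r$ of the relevant bundle; its zero divisor $D$ satisfies $\deg D=\sum\deg H_i-(\text{something intrinsic})$, and a comparison of this degree with ${\bf b}(\gt q)$ shows $D$ is supported inside $\gt q^*_{\rm sing}$. Since ${\bf p}_{\gt q}=1$, the codim-2 property holds, $\gt q^*_{\rm sing}$ contains no divisor, and hence neither does $\cj$.

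The precise bookkeeping is: one shows $\sum_{i=1}^r\deg H_i\ge {\bf b}(\gt q)$ always holds when the $H_i$ are algebraically independent invariants (an inequality of Knop/Panyushev type, coming from the fact that $d_\gamma H_1\wedge\cdots\wedge d_\gamma H_r$ and the top power of the symplectic form on a generic coadjoint orbit must be compatible), with equality forcing the zero locus of $\omega$ off $\gt q^*_{\rm sing}$ to be empty. Here the hypothesis $\sum\deg H_i={\bf b}(\gt q)$ is exactly what makes the estimate tight. I expect \emph{this degree/divisor comparison to be the main obstacle}: one must make precise how the fundamental semi-invariant ${\bf p}_{\gt q}$ enters as a factor of $\omega$ — concretely, $\omega$ is divisible by ${\bf p}_{\gt q}$ (or a power of it), and the complementary factor has degree $\sum\deg H_i-\deg{\bf p}_{\gt q}\cdot(\cdots)$ which must be accounted for — so that ${\bf p}_{\gt q}=1$ kills the only possible divisorial component. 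Once the codim-2 conclusion on $\cj$ is in hand, Skryabin's theorem finishes the proof immediately, and polynomiality of $\cS(\gt q)^{\gt q}=\C[H_1,\ldots,H_r]$ follows since the $H_i$ are algebraically independent and generate.
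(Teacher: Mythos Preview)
Your proposal is correct and follows essentially the same route as the paper: reduce via Skryabin's criterion \cite[Theorem~1.1]{ppy} to showing that $\cj(H_1,\ldots,H_r)$ contains no divisor, then identify $\cj(H_1,\ldots,H_r)$ with $\gt q^*_{\rm sing}$ and invoke the codim-2 hypothesis ${\bf p}_{\gt q}=1$. The paper dispatches the step you flag as the ``main obstacle'' (the degree/divisor comparison forcing $\cj=\gt q^*_{\rm sing}$ under the equality $\sum\deg H_i={\bf b}(\gt q)$) by direct citation to \cite[Theorem~1.2]{ppy} and \cite[Section~2]{jems}, confirming your instinct that this is where the real work lies.
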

\begin{proof}
Under our assumptions $\cj(H_1,\ldots,H_r)=\gt q^*_{\rm sing}$, see \cite[Theorem~1.2]{ppy} and \cite[Section~2]{jems}. Therefore $\C[H_1,\ldots,H_r]$ is an algebraically closed subalgebra of
$\cS(\gt q)$ by \cite[Theorem~1.1]{ppy}. Since ${\rm tr.deg}\,\cS(\gt q)^{\gt q}\le r$, each symmetric $\gt q$-invariant is algebraic over $\C[H_1,\ldots,H_r]$ and hence is contained in it. 
\end{proof}

For semi-direct products, we have some specific approaches to the  symmetric invariants.  
Suppose now that $\gt g=\Lie G$ is a reductive Lie algebra, $G$ is connected, and 
$\gt q=\gt g{\ltimes} V$, where $V$ is a finite-dimensional $G$-module. 

The vector space decomposition $\gt q=\gt g{\oplus} V$ leads to 
$\gt q^*=\gt g{\oplus} V^*$, where we identify $\gt g$ with $\gt g^*$. 
Each element $x\in V^*$ is considered as a point of $\gt q^*$ that is zero on $\gt g$.
We have $\exp(V){\cdot}x=\ad^*(V){\cdot}x+x$, where each element of $\ad^*(V){\cdot}x$ is zero on $V$. 
Note that $\ad^*(V){\cdot}x\subset \Ann(\gt g_x)\subset \gt g$ and $\dim(\ad^*(V){\cdot}x)$ is equal to
$\dim(\ad^*(\gt g){\cdot}x)=\dim\gt g-\dim\gt g_x$. Therefore $\ad^*(V){\cdot}x= \Ann(\gt g_x)$. 
 
The decomposition $\gt q=\gt g{\oplus} V$ defines also a bi-grading on $\cS(\gt q)$ and clearly 
$\cS(\gt q)^{\gt q}$ is a bi-homogeneous subalgebra, cf. \cite[Lemma~2.12]{z2}. 
 
%%% By a ``generic point" we mean a point from a non-empty open subset. 
A statement is true for a ``generic x" if and only if this statement is true for all 
points of a  non-empty open subset.

\begin{lm}\label{V-inv}
A function $F\in\C[\gt q^*]$ is a $V$-invariant if and only if $F(\xi+\ad^*(V){\cdot}x,x)=F(\xi,x)$ for 
generic $x\in V^*$ and any $\xi\in\gt g$. 
\end{lm}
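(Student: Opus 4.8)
The idea is to differentiate the $V$-action on $\gt q^*$ and translate the infinitesimal condition into the finite one using the exponential. First I would make explicit how $V$ acts on $\gt q^*=\gt g{\oplus}V^*$: for $v\in V$ and $\gamma=\xi+x$ with $\xi\in\gt g$, $x\in V^*$, the coadjoint action $\ad^*(v)$ sends $\xi+x$ to an element that is zero on $V$ (since $[V,V]=0$) and whose $\gt g$-component is the linear form $\eta\mapsto -\langle x,\rho(\eta)v\rangle$ on $\gt g$, i.e.\ $\ad^*(v){\cdot}(\xi+x)\in\Ann(\gt g_x)\subset\gt g$ as already noted in the text; in particular $V$ fixes the $V^*$-coordinate $x$ and acts on the $\gt g$-coordinate by the affine translations $\xi\mapsto \xi+\ad^*(v){\cdot}x$. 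Integrating, $\exp(v){\cdot}(\xi+x)=(\xi+\ad^*(V'){\cdot}x)+x$ for a suitable $v'$ depending on $v$; because $\ad^*(V){\cdot}x=\Ann(\gt g_x)$ is a linear subspace and the orbit $\exp(V){\cdot}(\xi+x)$ is the coset $\xi+\Ann(\gt g_x)$, running over all $v\in V$ recovers exactly the affine subspace $\{\xi+\ad^*(V){\cdot}x\}:=\xi+\Ann(\gt g_x)$.

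Granting this description of the orbits, the forward implication is immediate: if $F$ is $V$-invariant, then $F$ is constant on every $\exp(V)$-orbit, hence $F(\xi+\ad^*(V){\cdot}x,x)=F(\xi,x)$ for \emph{all} $x\in V^*$ and all $\xi\in\gt g$, in particular for generic $x$. For the converse, suppose the displayed identity holds for all $x$ in some nonempty open $U\subseteq V^*$ and all $\xi\in\gt g$. Then $F$ is constant on $\exp(V){\cdot}\gamma$ for every $\gamma=\xi+x$ with $x\in U$. Since $\exp(V)$ is connected, being $V$-invariant as a regular function is equivalent to being annihilated by the vector fields coming from $\gt q$ acting through $V$, i.e.\ to $\{v,F\}=0$ for all $v\in V$ where $\{\,,\,\}$ is the Lie--Poisson bracket on $\cS(\gt q)$; and this vanishing is itself a polynomial condition on $\gt q^*$. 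As it holds on the dense subset $\{\xi+x: \xi\in\gt g,\ x\in U\}$ of $\gt q^*$, it holds identically, so $F$ is $V$-invariant.

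In more detail for the converse one can avoid Poisson brackets entirely: the set $Z=\overline{\exp(V){\cdot}\{\xi+x:\xi\in\gt g,x\in U\}}$ is all of $\gt q^*$, since already the $U$-fibers are dense and $\exp(V)$ preserves the projection to $V^*$; the function $F-F\!\circ\!\exp(v)$ vanishes on the dense set $\{\xi+x:x\in U\}$ for each fixed $v$, hence vanishes on $\gt q^*$, giving $F\circ\exp(v)=F$ for every $v\in V$, which is exactly $V$-invariance. The only genuinely substantive point, and the one I would write out carefully, is the first one: identifying the $\exp(V)$-orbit through $\xi+x$ with the affine subspace $\xi+\ad^*(V){\cdot}x$, which rests on $\ad^*(V){\cdot}x=\Ann(\gt g_x)$ being linear (so that the orbit, a priori only the image of a polynomial map, is already closed and coincides with its tangent space translate). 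Everything else is the routine ``a polynomial vanishing on a dense set vanishes'' argument together with connectedness of $\exp(V)$.
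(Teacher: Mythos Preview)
Your proposal is correct and follows essentially the same route as the paper. The paper's own proof is two sentences: it observes that the hypothesis forces $\exp(v){\cdot}F=F$ on a dense open subset of $\gt q^*$ for each fixed $v\in V$, and then invokes the fact that two polynomials agreeing on a dense set agree everywhere---exactly your second variant of the converse argument; the orbit description $\exp(V){\cdot}(\xi{+}x)=(\xi{+}\ad^*(V){\cdot}x)+x$ that you write out carefully is set up in the paragraph immediately preceding the lemma (for $\xi=0$, with the general case immediate since $\ad^*(v){\cdot}(\xi{+}x)$ depends only on $x$).
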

\begin{proof}
Condition of the lemma guaranties that for each $v\in V$, $\exp(v){\cdot}F=F$ on a non-empty open 
subset of $\gt q^*$. Hence $F$ is a $V$-invariant.   
\end{proof}

For $x\in V^*$, let $\varphi_x\! : \C[\gt q^*]^Q\to \C[\gt g{+}x]^{G_x{\ltimes}\exp(V)}$ be the restriction map. By \cite[Lemma~2.5]{z2} $\C[\gt g{+}x]^{G_x{\ltimes}\exp(V)}\cong \cS(\gt g_x)^{G_x}$. Moreover, if we identify $\gt g{+}x$ with $\gt g$ choosing $x$ as the origin, then 
$\varphi_x(F)\in\cS(\gt g_x)$ for any $\gt q$-invariant $F$ \cite[Section~2]{z2}. 
Under certain assumptions on $G$ and $V$ the restriction map $\varphi_x$ is surjective, more details 
will be given shortly.

There is a non-empty open subset $U\subset V^*$ such
that the stabilisers $G_x$ and $G_y$ are conjugate in $G$ for any pair of points $x,\,y\in U$
see e.g. \cite[Theorem~7.2]{VP}. 
%%% In this situation 
Any representative of the conjugacy class $\{hG_xh^{-1}\mid h\in G, x\in U\}$
is said to be a {\it a generic stabliser} of the $G$-action on $V^*$.
%%% , the usual notation for this  object is $Q_*(X)$.       

There is one easy to handle case, $\gt g_x=0$ for a generic $x\in V^*$.
%%%Suppose that $\gt g_x=0$ for generic $x\in V^*$. Then 
Here $\C[\gt q^*]^{Q}=\C[V^*]^{G}$, see e.g. \cite[Example~3.1]{z2}, and 
$\xi+y\in\gt q^*_{\rm sing}$ only if $\gt g_y\ne 0$, where $\xi\in\gt g$, $y\in V^*$. 
The case  $\ind\gt g_x=1$ is more involved.

\begin{lm}\label{index-1-sur}
Assume that $G$ has no proper semi-invariants in $\C[V^*]$. 
Suppose that $\ind\gt g_x=1$, $\cS(\gt g_x)^{\gt g_x}\ne \C$,  and the map $\varphi_x$ is surjective for generic $x\in V^*$.
Then $\C[\gt q^*]^{\gt q}=\C[V^*]^G[F]$, where $F$ is a bi-homogeneous preimage of 
a generator of $\cS(\gt g_x)^{G_x}$ that is not divisible by any non-constant $G$-invariant in $\C[V^*]$.    
\end{lm}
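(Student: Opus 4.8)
The plan is to produce the invariant $F$ explicitly, show it is algebraically independent from $\C[V^*]^G$, and then apply a Jacobian/algebraic-closedness argument to conclude that $\C[V^*]^G[F]$ is everything. First I would fix a generic $x\in V^*$ and a homogeneous generator $\bar h$ of the (one-dimensional index, hence — by the codim-2-type machinery for reductive $\gt g_x$ — polynomial in one variable) algebra $\cS(\gt g_x)^{\gt g_x}=\C[\bar h]$. Using surjectivity of $\varphi_x$, pick $\tilde F\in\C[\gt q^*]^{\gt q}$ with $\varphi_x(\tilde F)=\bar h$; by the bi-homogeneity of $\cS(\gt q)^{\gt q}$ noted before Lemma \ref{V-inv}, I may take $\tilde F$ bi-homogeneous. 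Since $G$ has no proper semi-invariants on $\C[V^*]$, the ring $\C[V^*]^G$ is factorially closed enough that I can divide $\tilde F$ by the largest power of each non-constant invariant factor; the resulting $F$ is still a bi-homogeneous $\gt q$-invariant, still restricts to a nonzero multiple of a power of $\bar h$ under $\varphi_x$ (here one uses that the generic fibre $\gt g+x$ meets the zero set of no nonconstant $G$-invariant), and is by construction not divisible by any non-constant element of $\C[V^*]^G$.

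Next I would check that $F$ is algebraically independent over $\C(V^*)^G$, equivalently that $\C[V^*]^G[F]$ has transcendence degree $\dim\C[V^*]^G+1$. The inequality $\ind\gt q\ge \dim\C[V^*]^G+1$ should follow from the restriction picture: a generic point of $\gt q^*$ lies over a generic $x\in V^*$, and on the slice $\gt g+x$ the invariants separate an extra direction coming from $\cS(\gt g_x)^{\gt g_x}\ne\C$; conversely $\C[V^*]^G[F]$ injects into $\C[\gt q^*]^{\gt q}$, whose transcendence degree is $\ind\gt q$. Combining these, and using that $\varphi_x(F)$ is nonconstant, gives both algebraic independence of $F$ from $\C[V^*]^G$ and the equality $\ind\gt q=\dim\C[V^*]^G+1=:r$.

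Now I would invoke Skryabin's theorem (the characteristic-zero version quoted before Theorem \ref{thm-sum}): writing $H_1,\dots,H_{r-1}$ for a set of homogeneous generators of $\C[V^*]^G$ and $H_r=F$, it suffices to show that the Jacobian locus $\cj(H_1,\dots,H_r)$ contains no divisor of $\gt q^*$, for then $\C[H_1,\dots,H_r]$ is algebraically closed in $\cS(\gt q)$, and since every symmetric $\gt q$-invariant is algebraic over it (transcendence degree $r$), it is contained in it. To control the Jacobian locus I would work on the dense open set of $\gt q^*$ sitting over $x\in U$ with $\gt g_x\ne 0$: there $d_\gamma H_1,\dots,d_\gamma H_{r-1}$ span the conormal of the fibration $\gt q^*\to V^*/\!\!/G$ at generic points (since $\C[V^*]^G$ is polynomial and its Jacobian locus has codimension $\ge 2$ in $V^*$, as $G$ has no proper semi-invariants), while $d_\gamma F$ adds the slice direction precisely because $\varphi_x(F)=c\,\bar h^\ell$ with $d\bar h\ne 0$ on the slice away from $(\gt g_x)^*_{\rm sing}$. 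Hence the degeneracy locus is contained in the union of the preimage of $\cj_{V^*}(H_1,\dots,H_{r-1})$, the preimage of $\{x:\gt g_x=0\}$, and a subset projecting into the singular set of $\gt g_x$ — each of codimension $\ge 2$ in $\gt q^*$ under the stated hypotheses.

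The main obstacle is this last codimension estimate: one must rule out that $F$ becomes a power, or acquires a divisorial critical locus, along the locus where the generic stabiliser $\gt g_x$ degenerates. This is exactly where the hypotheses "$G$ has no proper semi-invariants on $\C[V^*]$" (which forces $\codim \cj_{V^*}\ge 2$ and controls the divisibility of $F$) and "$\varphi_x$ surjective" (which guarantees $F$ genuinely hits a generator rather than a proper power) are used, and it should be possible to package the argument so that it parallels the reasoning behind Theorem \ref{thm-sum} rather than re-deriving it from scratch.
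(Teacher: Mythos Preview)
Your approach has a genuine gap: you are implicitly assuming that $\C[V^*]^G$ is a polynomial ring and that the Jacobian locus of its generators has codimension $\ge 2$ in $V^*$. Neither is a hypothesis of the lemma. The assumption ``$G$ has no proper semi-invariants in $\C[V^*]$'' does \emph{not} force $\C[V^*]^G$ to be polynomial, so your sentence ``writing $H_1,\dots,H_{r-1}$ for a set of homogeneous generators of $\C[V^*]^G$ \dots\ since $\C[V^*]^G$ is polynomial'' is unjustified, and without it the Skryabin argument cannot even be set up (you would need $r$ algebraically independent elements, not merely a generating set). Even granting polynomiality, your codimension estimate for $\cj(H_1,\dots,H_r)$ is not established: you would need to know that $\gt q$, or at least $\gt g_x$, has the ``codim-2'' property, and this is nowhere in the hypotheses. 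A smaller point: $\gt g_x$ is not assumed reductive, so your appeal to ``codim-2-type machinery for reductive $\gt g_x$'' to get $\cS(\gt g_x)^{\gt g_x}=\C[\bar h]$ is the wrong justification (the correct, elementary reason is that in index~$1$ any two homogeneous invariants satisfy $f_1^a=cf_2^b$).

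The paper avoids all of this by a more direct route that needs no Jacobian control and no polynomiality of $\C[V^*]^G$. One passes to the localisation $\mathcal A:=\C[\gt q^*]^Q\otimes_{\C[V^*]^G}\C(V^*)^G$ and shows $\mathcal A=\C(V^*)^G[F]$ by restricting to $\gt g+u$ for generic $u$: since $\varphi_u(F)$ generates $\cS(\gt g_u)^{G_u}$ (same degree as the chosen generator at $x$), any homogeneous $T\in\mathcal A$ must have $\varphi_u(T)$ proportional to a power of $\varphi_u(F)$, hence $T$ is proportional over $\C(V^*)^G$ to a power of $F$. The no-semi-invariant hypothesis is then used only at the very end, to say $\C(V^*)^G=\mathrm{Quot}\,\C[V^*]^G$ and to clear denominators, yielding $\C(V^*)^G[F]\cap\C[\gt q^*]=\C[V^*]^G[F]$ because $F$ was chosen indivisible by non-constant $G$-invariants. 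This argument is both shorter and strictly more general than the Jacobian route you propose.
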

\begin{proof}
If we have a Lie algebra of index $1$, in our case $\gt g_x$, then the algebra of its symmetric invariants is a polynomial ring. There are many possible explanations of this fact. One of them is the following. Suppose that two 
non-zero 
homogeneous polynomials $f_1,f_2$ are algebraically dependent. Then $f_1^a=c f_2^b$ for some coprime integers $a,b>0$
and some $c\in\mathbb C^{^\times}$. If $f_1$ is an invariant, then so is 
a polynomial function $\sqrt[b]{f_1}=\sqrt[ab]{c}\sqrt[a]{f_2}$. 

Since $\cS(\gt g_x)^{\gt g_x}\ne \C$, it is generated by some homogeneous $f$. The group $G_x$ has finitely many connected components, hence $\cS(\gt g_x)^{G_x}$ is generated by a suitable power of $f$, say ${\bf f}=f^d$. 

Let $F\in\C[\gt q^*]^Q$ be a preimage of ${\bf f}$. Each its bi-homogeneous component is again a $\gt q$-invariant. Without loss of generality we may assume that $F$ is bi-homogenous. Also if $F$ is divisible by 
some non-scalar  $H\in\C[V^*]^G$, then we replace $F$ with $F/H$ and repeat the process as long as possible.

Whenever $G_y$ (with $y\in V^*$) is conjugate to $G_x$ and $\varphi_y(F)\ne 0$, 
$\varphi_y(F)$ is a $G_y$-invariant of the same degree as ${\bf f}$ and therefore is a generator 
of $\cS(\gt g_y)^{G_y}$. 
Clearly $\C(V^*)^G[F]\subset \C[\gt q^*]^Q{\otimes}_{\C[V^*]^G}\C(V^*)^G=:{\mathcal A}$ and 
${\mathcal A}\subset \cS(\gt g){\otimes}\C(V^*)^G$. If ${\mathcal A}$ contains a homogeneous in $\gt g$ polynomial $T$ that is not proportional (over $\C(V^*)^G$) to a power of $F$, then $\varphi_u(T)$ is not proportional  to a power of  $\varphi_u(F)$ for generic $u\in V^*$.
But $\varphi_u(T)\in\cS(\gt g_u)^{G_u}$. This implies that 
${\mathcal A}=\C(V^*)^G[F]$. 
It remains to notice that $\C(V^*)^G={\rm Quot}\,\C[V^*]^G$, since $G$ has no proper semi-invariants 
in $\C[V^*]$, and by the same reason $\C(V^*)^G[F]\cap \C[\gt q]=\C[V^*]^G[F]$ in case 
$F$ is  not divisible by any non-constant $G$-invariant in $\C[V^*]$.
\end{proof}

It is time to recall the Ra{\"i}s' formula \cite{r} for the index of a semi-direct product: 
\begin{equation}\label{ind-sum}
\ind\gt q=\dim V-(\dim\gt g-\dim\gt g_x)+\ind\gt g_x \ \text{ with $x\in V^*$ generic. } 
\end{equation}

\begin{lm}\label{V-deg}
Suppose that $H_1,\ldots,H_r\in\cS(\gt q)^Q$ are 
homogenous polynomials such that $\varphi_x(H_i)$ with $i\le \ind\gt g_x$ freely generate 
$\cS(\gt g_x)^{G_x}=\cS(\gt g_x)^{\gt g_x}$ for generic $x\in V^*$ and $H_j\in\C[V^*]^G$ for $j>\ind\gt g_x$; 
and suppose that $\sum\limits_{i=1} ^{\ind\gt g_x} \deg_{\gt g} H_i={\bf b}(\gt g_x)$. Then 
$\sum\limits_{i=1}^r \deg H_i={\bf b}(\gt q)$ if and only if
$\sum\limits_{i=1}^r \deg_V H_i=\dim V$. 
\end{lm}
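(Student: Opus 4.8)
The plan is to reduce the claimed equivalence to a single numerical identity obtained by juggling the three ingredients: Raïs' formula \eqref{ind-sum}, the definitions of ${\bf b}(\gt q)$ and ${\bf b}(\gt g_x)$, and the bookkeeping of total degree versus $V$-degree for the $H_i$. First I would fix a generic $x\in V^*$, write $r=\ind\gt q$, and set $s=\ind\gt g_x$, $d=\dim\gt g-\dim\gt g_x=\dim(\ad^*(\gt g){\cdot}x)$ so that Raïs gives $r=\dim V-d+s$. Splitting the generators according to the hypothesis, for $i\le s$ the polynomial $H_i$ restricts to a free generator of $\cS(\gt g_x)^{\gt g_x}$, while for $s<j\le r$ we have $H_j\in\C[V^*]^G$, hence $\deg_{\gt g}H_j=0$ and $\deg H_j=\deg_V H_j$. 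So $\sum_{i=1}^r\deg_V H_i=\sum_{i=1}^s\deg_V H_i+\sum_{j>s}\deg H_j$ and $\sum_{i=1}^r\deg H_i=\sum_{i=1}^s\deg_{\gt g}H_i+\sum_{i=1}^s\deg_V H_i+\sum_{j>s}\deg H_j$. Subtracting, $\sum_{i=1}^r\deg H_i-\sum_{i=1}^r\deg_V H_i=\sum_{i=1}^s\deg_{\gt g}H_i={\bf b}(\gt g_x)$ by hypothesis.

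Thus the difference between the total degree sum and the $V$-degree sum is the \emph{fixed} quantity ${\bf b}(\gt g_x)=(\ind\gt g_x+\dim\gt g_x)/2=(s+\dim\gt g_x)/2$, independent of which of the two conditions we are testing. Therefore $\sum_{i=1}^r\deg H_i={\bf b}(\gt q)$ holds if and only if $\sum_{i=1}^r\deg_V H_i={\bf b}(\gt q)-{\bf b}(\gt g_x)$, and it remains only to check that ${\bf b}(\gt q)-{\bf b}(\gt g_x)=\dim V$. Here I expand ${\bf b}(\gt q)=(\ind\gt q+\dim\gt q)/2=(r+\dim\gt g+\dim V)/2$, substitute $r=\dim V-d+s$ with $d=\dim\gt g-\dim\gt g_x$, and compute: $2{\bf b}(\gt q)=\dim V-d+s+\dim\gt g+\dim V=2\dim V+s+\dim\gt g_x$, so ${\bf b}(\gt q)=\dim V+(s+\dim\gt g_x)/2=\dim V+{\bf b}(\gt g_x)$. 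This is exactly the required identity, and the lemma follows.

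The argument is essentially pure arithmetic once the degree bookkeeping is set up, so there is no serious obstacle; the only place demanding care is making sure the split of the index set is legitimate, i.e. that the hypotheses really do force $\deg_{\gt g}H_j=0$ for $j>s$ (true since $H_j\in\C[V^*]^G$ depends only on the $V^*$-coordinates) and that the $\varphi_x(H_i)$ for $i\le s$ genuinely being a \emph{free} generating set makes $s=\ind\gt g_x$ the correct count (so that the number of "type-$V$" generators is $r-s=\dim V-d$, consistent with the case $\gt g_x=0$ treated earlier, where $\C[\gt q^*]^{\gt q}=\C[V^*]^G$). I would also remark that $\deg H_i=\deg_{\gt g}H_i+\deg_V H_i$ for bi-homogeneous $H_i$ is exactly the bi-grading on $\cS(\gt q)$ introduced after Lemma~\ref{V-inv}, so no extra input is needed there.
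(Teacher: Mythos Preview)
Your proof is correct and follows essentially the same route as the paper's: you first show $\sum_i \deg H_i - \sum_i \deg_V H_i = {\bf b}(\gt g_x)$ from the degree bookkeeping, and then verify the identity ${\bf b}(\gt q)={\bf b}(\gt g_x)+\dim V$ via Ra\"is' formula, exactly as the paper does (in two lines). One small caution: you write ``$r=\ind\gt q$'' at the outset, but the lemma does not assume this and your argument does not need it; the core computation only uses the hypothesis on $\sum_{i\le s}\deg_{\gt g}H_i$ and the index formula for ${\bf b}(\gt q)$, so you can simply drop that clause.
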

\begin{proof}
In view of the assumptions, we have 
$\sum\limits_{i=1}^r \deg H_i={\bf b}(\gt g_x)+\sum\limits_{i=1}^r \deg_V H_i$.  Further, by Eq.~\eqref{ind-sum}
$$
\begin{array}{l}
 {\bf b}(\gt q)=(\dim\gt q+\dim V-(\dim\gt g-\dim\gt g_x)+\ind\gt g_x)/2= \\
\quad  =\dim V+(\dim\gt g_x+\ind\gt g_x)/2={\bf b}(\gt g_x) + \dim V.
\end{array}
$$
The result follows. 
\end{proof}

From now on suppose that $G$ is semisimple. Then both  $G$ and  $Q$ have only trivial 
characters and hence cannot have proper semi-invariants. 
In particular, the fundamental semi-invariant is an invariant. We also 
have ${\rm tr.deg}\,\cS(\gt q)^{\gt q}=\ind\gt q$. Set $r=\ind\gt q$ and 
let $x\in V^*$ be generic. If $\C[\gt q^*]^Q$ is a polynomial ring, then there are bi-homogenous generators $H_1,\ldots ,H_r$  such that 
$H_i$ with $i>\ind\gt g_x$ freely generate $\C[V^*]^G$ and the invariants 
$H_i$ with $i\le \ind\gt g_x$ are {\it mixed}, they have positive degrees in $\gt g$ and $V$.  

\begin{thm}[{\cite[Theorem~5.7]{JSh}\&\cite[Proposition~3.11]{z2}}]\label{svob}
Suppose that $G$ is semisimple and $\C[\gt q^*]^{\gt q}$ is a polynomial ring
with homogeneous generators $H_1,\ldots,H_r$. Then 
\begin{itemize}
\item[({\sf i})] $\sum_{i=1}^r \deg H_i={\bf b}(\gt q)+\deg{\bf p}_{\gt q}$;
\item[({\sf ii})]   for   generic $x\in V^*$, the restriction map $\varphi_x\!: 
  \C[\gt q^*]^Q \to \C[\gt g{+}x]^{G_x{\ltimes}V}\cong\cS(\gt g_x)^{G_x}$ 
is surjective, $\cS(\gt g_x)^{G_x}=\cS(\gt g_x)^{\gt g_x}$, and $\cS(\gt g_x)^{G_x}$ is a polynomial ring in $\ind\gt g_x$ variables. 
\end{itemize}
\end{thm}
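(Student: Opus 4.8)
The plan is to deduce both items from the general machinery already set up in the excerpt, combined with the basic structure of semi-direct products. First I would recall that since $G$ is semisimple, $\gt q=\gt g\ltimes V$ has only trivial characters, so ${\bf p}_{\gt q}$ is an honest invariant; if $\C[\gt q^*]^{\gt q}=\C[H_1,\dots,H_r]$ is polynomial, then ${\bf p}_{\gt q}$ must be a product of powers of the $H_i$, and in fact one can argue (as in \cite[Theorem~5.7]{JSh}) that the Jacobian locus $\cj(H_1,\dots,H_r)$ coincides with $\gt q^*_{\rm sing}$ together with the divisorial part controlled by ${\bf p}_{\gt q}$. The degree identity in ({\sf i}) is then the ``extended Kostant criterion'': differentiating the generators and comparing the vanishing order of $dH_1\wedge\dots\wedge dH_r$ along $\gt q^*_{\rm sing}$ on one hand with $\dim\gt q-\ind\gt q$ on the other yields $\sum_i(\deg H_i-1)=\dim\gt q-\ind\gt q-\codim \gt q^*_{\rm sing}+(\text{contribution of }{\bf p}_{\gt q})$; bookkeeping with ${\bf b}(\gt q)=(\ind\gt q+\dim\gt q)/2$ collapses this to $\sum_i\deg H_i={\bf b}(\gt q)+\deg{\bf p}_{\gt q}$. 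I would lift the precise form of this computation from \cite[Theorem~5.7]{JSh} rather than redo it.

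For ({\sf ii}) I would work with the restriction map $\varphi_x\colon\C[\gt q^*]^Q\to\C[\gt g{+}x]^{G_x\ltimes V}\cong\cS(\gt g_x)^{G_x}$ introduced just before the statement, whose target is already identified via \cite[Lemma~2.5]{z2}. The key point is that the image $\varphi_x(\C[\gt q^*]^{\gt q})$ is a polynomial subring: indeed it is generated by the $\varphi_x(H_i)$, and since the $H_i$ are algebraically independent and $\varphi_x$ is a graded algebra map, the images that are nonzero are still algebraically independent (one checks the transcendence degree is preserved for generic $x$ by the Raïs formula \eqref{ind-sum}, which says $\ind\gt g_x=\ind\gt q-\dim V+(\dim\gt g-\dim\gt g_x)$, so exactly $\ind\gt g_x$ of the generators survive with nonzero $\gt g$-part, the remaining ones restricting to elements of $\C[V^*]^G$ which $\varphi_x$ kills). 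To get surjectivity I would combine this with the inclusion $\varphi_x(\C[\gt q^*]^{\gt q})\subset\cS(\gt g_x)^{G_x}$ and a dimension count: $\cS(\gt g_x)^{G_x}$ has transcendence degree $\ind\gt g_x$ over $\C$, so $\varphi_x(\C[\gt q^*]^{\gt q})$ is a polynomial subring of the same transcendence degree, and one rules out a proper such inclusion by a Jacobian/codimension argument on $\gt g_x^*$ exactly parallel to Lemma~\ref{index-1-sur} — if there were an invariant of $\gt g_x$ outside the image, pulling back would produce, via the reverse direction of the construction in \cite[Section~2]{z2}, a $\gt q$-invariant not in $\C[H_1,\dots,H_r]$, a contradiction. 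This simultaneously gives $\cS(\gt g_x)^{G_x}=\cS(\gt g_x)^{\gt g_x}$ (the $G_x$-invariants already exhaust the $\gt g_x$-invariants because every homogeneous $\gt g_x$-invariant has a $G_x$-invariant power, as in the proof of Lemma~\ref{index-1-sur}, and freeness forbids proper powers) and polynomiality of $\cS(\gt g_x)^{G_x}$ in $\ind\gt g_x$ variables.

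The main obstacle I anticipate is proving that $\varphi_x$ is \emph{surjective} rather than merely that its image is a polynomial ring of the right transcendence degree: a priori the image could be a proper polynomial subring (e.g. generated by powers of the true generators). Resolving this is precisely where one needs to invoke the hypothesis that $\C[\gt q^*]^{\gt q}$ is polynomial together with the sharp degree equality from ({\sf i}): the total degree budget $\sum\deg H_i={\bf b}(\gt q)+\deg{\bf p}_{\gt q}$, matched against Lemma~\ref{V-deg} and the known value ${\bf b}(\gt q)={\bf b}(\gt g_x)+\dim V$, forces $\sum_{i\le\ind\gt g_x}\deg_{\gt g}H_i$ to equal ${\bf b}(\gt g_x)$ exactly (when ${\bf p}_{\gt q}=1$), which by Theorem~\ref{thm-sum} applied to $\gt g_x$ means the $\varphi_x(H_i)$ must be genuine generators, not proper powers. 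I would present ({\sf i}) first, then feed its conclusion into ({\sf ii}); citing \cite[Theorem~5.7]{JSh} and \cite[Proposition~3.11]{z2} for the parts of the argument that are verbatim from those sources.
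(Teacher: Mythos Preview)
The paper does not give its own proof of this theorem: it is stated with attributions to \cite[Theorem~5.7]{JSh} for part~({\sf i}) and \cite[Proposition~3.11]{z2} for part~({\sf ii}), and the text moves on immediately afterwards. So there is no in-paper argument to compare your proposal against; the intended ``proof'' is simply the citation.

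That said, your sketch for ({\sf ii}) has real gaps if taken as a self-contained argument. Your surjectivity mechanism at the end relies on the degree identity from ({\sf i}) together with Lemma~\ref{V-deg} to force $\sum_{i\le\ind\gt g_x}\deg_{\gt g}H_i={\bf b}(\gt g_x)$, and then on Theorem~\ref{thm-sum} applied to $\gt g_x$ to conclude that the $\varphi_x(H_i)$ genuinely generate. But Theorem~\ref{thm-sum} needs ${\bf p}_{\gt g_x}=1$, which you have not established (and your parenthetical ``when ${\bf p}_{\gt q}=1$'' is about the wrong algebra and is anyway not a hypothesis of the theorem). Moreover, Lemma~\ref{V-deg} as stated already \emph{assumes} that the $\varphi_x(H_i)$ freely generate $\cS(\gt g_x)^{\gt g_x}$, so you cannot use it to prove that very fact. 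Finally, your argument for $\cS(\gt g_x)^{G_x}=\cS(\gt g_x)^{\gt g_x}$ --- that each $\gt g_x$-invariant has a $G_x$-invariant power and ``freeness forbids proper powers'' --- does not go through: freeness of $\cS(\gt g_x)^{G_x}$ does not by itself rule out that one of its generators is a proper power of a $\gt g_x$-invariant. The actual argument in \cite[Proposition~3.11]{z2} proceeds differently, and the paper is right to defer to it rather than reproduce it.
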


It is worth mentioning that $\varphi_x$ is also surjective for stable actions.  
An action of $G$ on $V$ 
is called {\it stable} if generic $G$-orbits in $V$ are closed, for more details see \cite[Sections~2.4\& 7.5]{VP}. 
By \cite[Theorem~2.8]{z2} $\varphi_x$ is surjective for generic $x\in V^*$ if the $G$-action on $V^*$ is stable.

\section{$\Z/2\Z$-contractions} 

The initial motivation for studying symmetric invariants of semi-direct products  
was related to  a conjecture of D.\,Panyushev  on $\Z_2$-contractions of reductive Lie algebras. 
The results of \cite{z2}, briefly outlined in Section~\ref{sec-2}, has settled the problem. 

Let $\gt g=\gt g_0{\oplus}\gt g_1$ 
be a {\it symmetric decomposition}, i.e., a $\Z/2\Z$-grading of $\gt g$.  
A semi-direct product, $\g=\gt g_0{\ltimes}\gt g_1$, where $\gt g _1$ is an 
Abelian ideal, can be seen as a {\it contraction}, in this case a {\it $\Z_2$-contraction},
of $\gt g$. For example, starting with a symmetric pair $(\gt{so}_{n+1},\gt{so}_n)$, 
one arrives at $\g=\gt{so}_n{\ltimes}\C^n$.  In \cite{Dima06}, it was conjectured
that $\cS(\g)^{\g}$ is a polynomial ring (in $\rk\gt g$ variables).

\begin{thm}[\cite{Dima06,jems}, \cite{z2}] Let $\tilde{\gt g}$ be a $\Z_2$-contraction of a reductive
Lie algebra $\gt g$. Then $\cS(\tilde{\gt g})^{\g}$ is a  polynomial ring 
(in $\rk\gt g$ variables)  if and only if the restriction homomorphism $\C[\gt g]^{\gt g}\to \C[\gt g_1]^{\gt g_0}$ is 
surjective.
\end{thm}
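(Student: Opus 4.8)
The plan is to deduce this equivalence from the general machinery of Section~\ref{sec-2} together with the structure theory of $\Z_2$-gradings. Write $\gt q=\g=\gt g_0{\ltimes}\gt g_1$, with $G_0$ the connected group acting on $V:=\gt g_1$. The first observation is that $G_0$ is the image of a semisimple (or reductive with trivial character on $\gt g_1$) group, so $\g$ has only trivial characters, and the restriction map $\varphi_x\colon \C[\g^*]^{\g}\to\cS((\gt g_0)_x)^{(\gt g_0)_x}$ is available. By Kostant--Rallis theory for symmetric pairs, a generic $x\in\gt g_1$ is semisimple, its stabiliser $(\gt g_0)_x$ is a reductive Lie algebra (the centraliser of a Cartan subspace $\gt a\subset\gt g_1$ intersected with $\gt g_0$), and $\ind(\gt g_0)_x=\rk(\gt g_0)_x=\dim\gt a=:\ell$. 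Moreover $\dim\gt g_0-\dim(\gt g_0)_x=\dim\gt g_1-\ell$, so Ra\"is' formula~\eqref{ind-sum} gives $\ind\g=\dim\gt g_1-(\dim\gt g_1-\ell)+\ell=2\ell$, while it is classical that $\rk\gt g=\ell+\rk(\gt g_0)_x$; combined with $\cS((\gt g_0)_x)^{(\gt g_0)_x}$ being polynomial in $\rk(\gt g_0)_x$ variables this will let us match dimensions. The $G_0$-action on $\gt g_1$ is stable (closed generic orbits, again Kostant--Rallis), so by the remark after Theorem~\ref{svob} the map $\varphi_x$ is \emph{always} surjective; thus the surjectivity of $\C[\gt g]^{\gt g}\to\C[\gt g_1]^{\gt g_0}$ is \emph{not} about $\varphi_x$ but is the genuine extra hypothesis, and the point is to see when the invariants assemble into a polynomial ring of the right size.

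For the forward direction, assume $\cS(\g)^{\g}$ is polynomial in $\rk\gt g$ generators. Then Theorem~\ref{svob}({\sf i}) forces $\sum\deg H_i={\bf b}(\g)+\deg{\bf p}_{\g}$; I would first argue that ${\bf p}_{\g}=1$ here (the singular set has codimension $\ge 2$ because the set of non-generic $x$ already has codimension $\ge 2$ in $\gt g_1$ by Chevalley-type arguments, and the fibre directions add nothing), so $\sum\deg H_i={\bf b}(\g)$. One then uses a degree count: the $\ell$ ``mixed'' generators restrict under $\varphi_x$ to free generators of $\cS((\gt g_0)_x)^{(\gt g_0)_x}$, while the remaining $\ell$ generators lie in $\C[\gt g_1]^{\gt g_0}$. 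The classical fact (Kostant--Rallis) that $\C[\gt g_1]^{\gt g_0}$ is a polynomial ring in $\ell$ generators whose degrees are the degrees of $W_{\gt a}$ is what identifies those $\ell$ generators; then the hypothesis that $\C[\gt g]^{\gt g}\to\C[\gt g_1]^{\gt g_0}$ is surjective is equivalent to saying these generators all come from restrictions of the basic invariants $H_1,\dots,H_{\rk\gt g}$ of $\gt g$. The contrapositive: if the restriction map is not surjective, some generator of $\C[\gt g_1]^{\gt g_0}$ is not hit, the $\gt g_0$-invariant degrees in $\gt g_1$ are ``too large'', the Poincar\'e-series/degree identity of Theorem~\ref{svob}({\sf i}) fails, and $\cS(\g)^{\g}$ cannot be polynomial.

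For the converse, assume surjectivity of $\C[\gt g]^{\gt g}\to\C[\gt g_1]^{\gt g_0}$. The strategy is to produce explicit homogeneous $\g$-invariants and apply Theorem~\ref{thm-sum}. Take the basic invariants $p_1,\dots,p_{\rk\gt g}$ of $\gt g$ and use the ``$\gt g_1$-shift'' construction: for $\xi_0+\xi_1\in\g^*\cong\gt g_0\oplus\gt g_1$ and the grading operator, expand $p_i(\xi_0+t\xi_1)$ and take appropriate bi-homogeneous components; surjectivity guarantees that after this process we recover both $\ell$ generators of $\C[\gt g_1]^{\gt g_0}$ and $\ell$ further mixed invariants restricting to generators of $\cS((\gt g_0)_x)^{(\gt g_0)_x}$, i.e. $2\ell=\rk\gt g+ \ell$... here one must be careful: the count is $r=\ind\g=2\ell$ but $\rk\gt g=\ell+\rk(\gt g_0)_x$, so the two numbers agree only because $\rk(\gt g_0)_x=\ell$ fails in general --- so in fact the mixed generators number $\rk(\gt g_0)_x$ and the pure ones number $\ell$, summing to $\rk\gt g$, and Ra\"is' formula must be re-read as $\ind\g=\ell+\rk(\gt g_0)_x=\rk\gt g$. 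With $r=\ind\g=\rk\gt g$ algebraically independent homogeneous invariants in hand, and with ${\bf p}_{\g}=1$ and the degree sum equal to ${\bf b}(\g)$ checked via Lemma~\ref{V-deg} (whose hypothesis $\sum_{i\le\ind(\gt g_0)_x}\deg_{\gt g_0}H_i={\bf b}((\gt g_0)_x)$ holds because $(\gt g_0)_x$ is reductive and its basic invariants have the standard degrees), Theorem~\ref{thm-sum} yields $\cS(\g)^{\g}=\C[H_1,\dots,H_r]$. The main obstacle I expect is the bookkeeping in the converse: verifying that the shifted basic invariants of $\gt g$, after extracting bi-homogeneous components, actually give \emph{algebraically independent} invariants with the correct restrictions and the exact degree sum --- this is where surjectivity of the restriction map is used in an essential, quantitative way, and where one leans hardest on Kostant--Rallis (polynomiality of $\C[\gt g_1]^{\gt g_0}$, semisimplicity and reductivity of the generic stabiliser, the degree matching $\sum(\text{mixed }\deg_{\gt g_0})+\sum(\text{pure }\deg)={\bf b}(\g)$).
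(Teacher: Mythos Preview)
Your overall architecture matches the paper's outline: the ``surjective $\Rightarrow$ polynomial'' direction via the bi-homogeneous components $H_i^\bullet$ of basic invariants of $\gt g$ together with the ``codim-2'' property and Theorem~\ref{thm-sum} is exactly what \cite{Dima06,jems} do, and the ``non-surjective $\Rightarrow$ non-polynomial'' direction via a degree contradiction with Theorem~\ref{svob}({\sf i}) is exactly what \cite{z2} does. Your self-correction on Ra\"is' formula is also right: $\ind\g=\ell+\rk\gt m=\rk\gt g$ with $\gt m=(\gt g_0)_x$ and $\ell=\dim\gt a$, not $2\ell$.

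However, your forward direction contains a genuine gap in the mechanism. You write that if $\C[\gt g]^{\gt g}\to\C[\gt g_1]^{\gt g_0}$ is not surjective then ``some generator of $\C[\gt g_1]^{\gt g_0}$ is not hit, the $\gt g_0$-invariant degrees in $\gt g_1$ are `too large'\,''. This is not the reason: the algebra $\C[\gt g_1]^{\gt g_0}$ is \emph{always} polynomial with fixed degrees (those of the little Weyl group $W_{\gt a}$), independently of surjectivity, and these pure generators are always among the generators of $\cS(\g)^{\g}$. Non-surjectivity of $\C[\gt g]^{\gt g}\to\C[\gt g_1]^{\gt g_0}$ does not make those degrees larger. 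What actually goes wrong concerns the \emph{mixed} generators. In the surjective case one can take the mixed generators to be the $H_i^\bullet$ and their $V$-degrees are controlled; in the non-surjective case the construction $H_i\mapsto H_i^\bullet$ no longer yields algebraically independent invariants (cf.\ the paper's remark that ``this construction of generators cannot work if the restriction homomorphism is not surjective''), and one must argue that \emph{any} hypothetical set of free generators is forced, via the surjectivity of the \emph{other} restriction map $\varphi_x$ (onto $\cS(\gt m)^{\gt m}$), to have total degree strictly exceeding ${\bf b}(\g)$. The paper indicates that this is done case by case for the four Helgason pairs, not by a uniform Poincar\'e-series comparison as you suggest. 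So your contrapositive step needs to be replaced by an honest lower bound on $\sum\deg_V H_i$ for the mixed generators in each of the four exceptional pairs, and you should keep the two restriction maps --- $\C[\gt g]^{\gt g}\to\C[\gt g_1]^{\gt g_0}$ and $\varphi_x\colon\cS(\g)^{\g}\to\cS(\gt m)^{\gt m}$ --- clearly separated.
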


%%% By \cite{Dima06,jems} the statement holds in those cases, where 
%% the restriction homomorphism $\C[\gt g]^{\gt g}\to \C[\gt g_1]^{\gt g_0}$ is 
%% surjective. Moreover, 
If we are in one of the ``surjective" cases, then one can describe the 
generators of $\cS(\g)^{\g}$.  
Let $H_1,\ldots,H_r$ be suitably chosen homogeneous generators of $\cS(\gt g)^{\gt g}$ and 
let $H_i^\bullet$ be the bi-homogeneous (w.r.t. $\gt g=\gt g_0{\oplus}\gt g_1$) 
component of $H_i$ of the highest $\gt g_1$-degree. Then 
$\cS(\g)^{\g}$ is freely generated by the polynomials $H_i^\bullet$ (of course, providing 
the restriction homomorphism $\C[\gt g]^{\gt g}\to \C[\gt g_1]^{\gt g_0}$ is 
surjective) \cite{Dima06,jems}. 

Unfortunately,  this construction of generators cannot work if the restriction homomorphism is not surjective,
see \cite[Remark~4.3]{Dima06}. 
As was found out by S.\,Helgason \cite{Helg}, there are four ``non-surjective" irreducible symmetric pairs, 
namely, $(E_6, F_4)$, $(E_7, E_6{\oplus}\C)$,  
$(E_8, E_7{\oplus}\gt{sl}_2)$, and $(E_6,\gt{so}_{10}{\oplus}\gt{so}_2)$.
The approach to semi-direct products developed in \cite{z2} showed that Panyushev's conjecture 
does not hold for them. %%  the  four ``non-surjective" pairs. 
Next we outline some ideas of the proof. 

Let $G_0\subset G$ be a connected subgroup with $\Lie G_0=\gt g_0$. 
Then $G_0$ is reductive, it acts on $\gt g_1\cong\gt g_1^*$, and this action is stable. 
Let $x\in\gt g_1$ be a generic element and $G_{0,x}$ be its stabiliser in $G_0$.
The groups $G_{0,x}$ are reductive and they are  known for all symmetric pairs. 
In particular, $\cS(\gt g_{0,x})^{G_{0,x}}$ is a polynomial ring. It is also known that 
$\C[\gt g_1]^{G_0}$ is a polynomial ring. By \cite{Dima06} $\g$ has the ``codim-2" property and 
$\ind\g=\rk\gt g$. 

Making use of the surjectivity of $\varphi_x$ one can show that if 
$\C[\g^*]^{\g}$ is freely generated by some $H_1,\ldots,H_r$, then necessary 
$\sum\limits_{i=1}^r \deg H_i>{\bf b}(\g)$ for $\g$ coming from one of the ``non-surjective" pairs \cite{z2}. 
In view of some results from \cite{JSh} this leads to a contradiction. 

Note that in case of $(\gt g,\gt g_0)=(E_6, F_4)$, $\gt g_0=F_4$ is simple and 
$\g$ is a semi-direct product of $F_4$ and $\C^{26}$,
which, of course, comes from one of the representations in Schwarz's list \cite{gerry}. 
  
\section{Examples related to the defining representation of $\gt{sl}_n$} 

Form now assume that 
$\gt g=\gt{sl}_n$ and  
$V=m (\C^n)^*{\oplus}k \C^n$ with $n\ge 2$, $m\ge 1$, $m\ge k$. 
According to \cite{gerry} $\C[V]^G$ is a polynomial ring if either $k=0$ and 
$m\le n{+}1$ or $m\le n$, $k\le n{-}1$. One finds also the description of the
generators of  $\C[V^*]^G$ and their degrees in \cite{gerry}. In this section, we classify all cases, 
where 
$\C[\gt q^*]^{\gt q}$ is a polynomial ring and for each of them give the fundamental semi-invariant. 
  
\begin{ex}\label{ex-0}  
Suppose that either   $m\ge n$ or $m=k=n{-}1$. Then $\gt g_x=0$ for generic $x\in V^*$ and therefore 
$\C[\gt q^*]^Q=\C[V^*]^G$, i.e., $\C[\gt q^*]^Q$ is a polynomial ring if and only if $
\C[V^*]^G$ is. The latter takes place for $(m,k)=(n{+}1,0)$,  for $m=n$ and any $k<n$,
as well as for $m=k=n{-}1$. Non-scalar fundamental semi-invariants appear here only for  
%%% Among the algebras $\gt q$ with $\C[\gt q^*]^Q$ being a polynomial ring 
%%% the ``codim-2" condition fails exactly for 
\begin{itemize}
\item $m=n$, where ${\bf p}$ is given by $\det (v)^{n{-}1{-}k}$ with $v\in n\C^n$;
\item $m=k=n{-}1$, where ${\bf p}$ is the sum of the principal $2k{\times}2k$-minors of 
$$
\left(
  \begin{array}{c|c} 
   0 & v \\
   \hline
   w & 0 \\
\end{array}\right) \text{ with } v\in k\C^n, w\in k(\C^n)^*. 
$$

\end{itemize}
%% by the degree considerations.  
\end{ex}

In the rest of the section, we assume that $\gt g_x\ne 0$ for generic $x\in V^*$. 

\subsection{The case $k=0$}
%% Suppose first that $k=0$. 
Here the ring of $G$-invariants on $V^*$ is generated 
by 
$$\{\Delta_I \mid I\subset \{1,\ldots,m\},  |I|=n\} \ \ \text{\cite[Section~9]{VP}}, $$ where 
each $\Delta_I(v)$ is the determinant of the corresponding sub-matrix of $v\in V^*$. 
The generators are algebraically independent if and only if $m\le n+1$, see also \cite{gerry}. 

We are interested only in  $m$ that are smaller than $n$. Let $n=qm+r$, where $0 < r \le  m$, and let $I\subset \{1,\ldots, m\}$ be 
a subset  of cardinality $r$. By choosing the corresponding $r$ columns of $v$ we get a matrix $w=v_I$. 
Set
\begin{equation}\label{eq-F-1}
F_I (A,v):=\det\left(v | Av |  \ldots  | A^{q{-}1}v  |A^q w\right), \ \text{ where } A\in\gt g, v\in V^*. 
\end{equation}
%%%% and $w$ is a sub-matrix of $v$ arising by choosing any $r$ columns, in other words, .
Clearly each  $F_I$ is an $\SL_n$-invariant. Below we will see that they are also $V$-invariants. 
If $r=m$, then %%the matrix in question ends with $A^{q-1}v$ and 
there is just one invariant, $F=F_{\{1,\ldots,m\}}$. If $r$ is either $1$ or $m-1$, we get $m$ invariants.  
%% Most probably the polynomials $F_I$ generate $\C[\gt q^*]^Q$. %%% over $\C[V^*]^G$. 

\begin{lm}\label{F-V} Each $F_I$ defined by Eq.~\eqref{eq-F-1} is a $V$-invariant. 
\end{lm}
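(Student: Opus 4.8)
The plan is to check the criterion of Lemma~\ref{V-inv}. Recall from Section~\ref{sec-2} that $\ad^*(V){\cdot}x=\Ann(\gt g_x)\subset\gt g\cong\gt g^*$ for $x\in V^*$; hence it is enough to prove
\[
F_I\bigl(A+\ad^*(v_0){\cdot}v,\,v\bigr)=F_I(A,v)\qquad\text{for all }v_0\in V,\ A\in\gt g\ \text{ and generic }v\in V^*.
\]
First I would make this shift explicit. Writing $v\in V^*=m\C^n$ as an $n\times m$ matrix and $v_0\in V=m(\C^n)^*$ as an $m\times n$ matrix, and identifying $\gt g^*$ with $\gt g$ through the trace form, a short computation with the bracket~\eqref{Lie-s} shows that $\ad^*(v_0){\cdot}v$ coincides with $-vv_0$ up to a scalar matrix. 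So the claim will follow from two assertions about $F_I$, which I extend to a polynomial on $\gt{gl}_n{\oplus}V^*$ by the same formula~\eqref{eq-F-1}: \textbf{(a)} $F_I$ is invariant under $A\mapsto A+c\,\id$ for every $c\in\C$ (this absorbs the scalar ambiguity above), and \textbf{(b)} $F_I(A+vB,v)=F_I(A,v)$ for every $B\in\mathrm{Mat}_{m\times n}$ (used with $B=-v_0$).

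Both (a) and (b) are proved by column operations on the $n\times n$ matrix $M(A)=(v\,|\,Av\,|\,\dots\,|\,A^{q-1}v\,|\,A^qw)$ whose determinant is $F_I(A,v)$, using repeatedly that adding to a column a linear combination of the columns lying to its left does not change the determinant. For (a) one expands $(A+c\,\id)^jv=A^jv+\sum_{i<j}\binom{j}{i}c^{\,j-i}A^iv$ and likewise $(A+c\,\id)^qw$; since $w=v_I$ is a submatrix of $v$, the columns of $A^iw$ occur among those of $A^iv$, so every term of order $<j$ already sits in an earlier block of $M$ and can be cleared block by block from the left, restoring $M(A)$. For (b) the crucial point is the identity
\[
(A+vB)^jv=A^jv+\sum_{i=0}^{j-1}(A^iv)\,D_{i,j}
\]
for suitable $m\times m$ matrices $D_{i,j}$, proved by induction on $j$: the step works because $(vB)(A^iv)=v\bigl((BA^i)v\bigr)$ is $v$ times an $m\times m$ matrix, hence a combination of the columns of the first block, so the induction produces only further terms of this shape with smaller exponents. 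Writing the last block as $(A+vB)^qw=\bigl((A+vB)^qv\bigr)E$, where $w=vE$ for the $m\times r$ column-selection matrix $E$, and again clearing the $(A^iv)D_{i,j}$--terms from left to right, turns $M(A+vB)$ back into $M(A)$.

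Combining (a) and (b) gives $F_I(A+\ad^*(v_0){\cdot}v,v)=F_I(A,v)$, and Lemma~\ref{V-inv} then yields that $F_I$ is a $V$-invariant. The only genericity used is that the columns of $v$ be linearly independent (which merely keeps the bookkeeping transparent -- in fact (a) and (b) hold for all $v$). I expect the step needing the most care to be the explicit identification of $\ad^*(v_0){\cdot}v$ with $-vv_0$ modulo scalars, i.e.\ getting the coadjoint action and the trace-form identification $\gt g^*\cong\gt g$ right, together with carrying out the column reductions in the correct left-to-right order so that at each stage only already-normalised columns are used.
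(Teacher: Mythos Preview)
Your argument is correct. The identification $\ad^*(v_0){\cdot}v=-vv_0$ modulo scalars is right, and claims \textbf{(a)} and \textbf{(b)} follow from the column reductions exactly as you describe; the induction for \textbf{(b)} is the key point and it goes through because every occurrence of $vB$ pushes a block into the column span of $v$.

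The paper's proof takes a different route. Instead of working with a general $v$ and analysing the perturbations $A\mapsto A+c\,\id$ and $A\mapsto A+vB$ separately, it fixes the single generic point $x=E_m$, decomposes $A$ into parabolic blocks $(\alpha,\beta,A_-)$, and first \emph{simplifies} $F_I(A,E_m)$ by column operations to the closed form
\[
F_I(A,E_m)=\det\bigl(A_-\,\bigl|\,\beta A_-\,\bigl|\,\ldots\,\bigl|\,\beta^{q-2}A_-\,\bigl|\,\beta^{q-1}A_{-,I}\bigr),
\]
and only then observes that every $A\in\gt g_x^{\perp}$ has $A_-=0$ and $\beta_A$ scalar, so adding it does nothing. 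Your approach is arguably cleaner for the lemma itself (no coordinates, works for all $v$), but the paper's detour pays a dividend: the displayed formula above is exactly Eq.~\eqref{r-1}, and it is reused in the proof of Theorem~\ref{th-0} to identify $\varphi_x(F_I)$ with the corresponding invariant $F_I$ of the smaller algebra $\gt g_x=\gt{sl}_{n-m}{\ltimes}m\C^{n-m}$, feeding the induction on $n$. If you keep your proof, you would still need to derive that restriction formula separately at that later point.
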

\begin{proof}
According to Lemma~\ref{V-inv} we have to show that $F_I(\xi{+}\ad^*(V) {\cdot} x,x)=F(\xi,x)$ for generic $x\in V^*$ and any $\xi\in\gt{sl}_n$. 
Since $m < n$, there is an open $\SL_n$-orbit in $V^*$ and we can take $x$ as $E_m$. 
Let $\gt p\subset\gt{gl}_n$ be the standard  parabolic subalgebra  corresponding to the composition 
$(m,n{-}m)$ and let $\gt n_-$  be the nilpotent radical of the opposite parabolic. 
Each element (matrix) $\xi\in\gt{gl}_n$ is a sum $\xi=\xi_-+\xi_p$ with $\xi_-\in\gt n_-$, $\xi_p\in\gt p$. 
In this notation 
$F_I(A,E_m)=\det\left( A_-  |(A^2)_- \ldots  | (A^{q{-}1})_-| (A^q)_{-,I} \right)$. 

Let $\alpha=\alpha_A$ and $\beta=\beta_A$ be 
$m{\times}m$ and $(n{-}m){\times}(n{-}m)$-submatrices of $A$ standing in the upper left and lower right corner, respectively. 
Then $(A^{s{+}1})_-=\sum_{t=0}^{s}  \beta^t A_- \alpha^{s{-}t}$. 
Each column of $A_-\alpha$ is a linear combination of columns of $A_-$  and each column of
$\beta^{t} A_-\alpha^{j{+}1}$  
is a linear combination of columns of $\beta^{t} A_-\alpha^{j}$. Therefore 
\begin{equation}\label{r-1}
F_I(A,E_m)=\det\left( A_- |\ldots | (A^{q{-}1})_- | (A^q)_{-,I} \right)=\det\left( A_-|\beta A_-|\ldots |\beta^{q{-}2}A_-| \beta^{q{-}1}A_{-,I}\right).
\end{equation}

Notice that $\gt g_x\subset\gt p$ and the nilpotent radical of $\gt p$ is contained in $\gt g_x$ (with $x=E_m$). 
Since $\ad^*(V){\cdot}x=\Ann(\gt g_x)=\gt g_x^{\perp}\subset \gt g$ (after the identification $\gt g\cong\gt g^*$), $A_-=0$ for any $A\in\gt g_x^{\perp}$; and we have $\beta_A=c E_{n{-}m}$ with $c\in\C$ for this  $A$. An easy observation is that 
$$
\det\left(\xi_- | (\beta_\xi{+}cE_{n{-}m})\xi_- | \ldots | (\beta_\xi{+}cE_{n{-}m})^{q-1} \xi_{-,I} \right)=
  \det\left(\xi_- | \beta_\xi \xi_- | \ldots | \beta_\xi^{q-1} \xi_{-,I} \right).
$$
Hence
$F_I(\xi+A,E_m)=F_I(\xi,E_m)$ for all $A\in\ad^*(V){\cdot}E_m$ and all $\xi\in\gt{sl}_n$.  
\end{proof}

\begin{thm}\label{th-0}
Suppose that $\gt q=\gt{sl}_n{\ltimes} m(\C^n)^*$. Then $\C[\gt q^*]^Q$ is a polynomial ring 
if and only if $m\le n+1$ and $m$ divides either $n{-}1$, $n$ or $n{+}1$. Under these assumptions on 
$m$,  ${\bf p}_{\gt q}=1$ exactly then, when $m$ divides either $n{-}1$ or $n{+}1$. 
\end{thm}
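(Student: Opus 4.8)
The plan is to apply Theorem~\ref{thm-sum} and the degree criterion of Lemma~\ref{V-deg}, so the bulk of the work is to (a) identify the generic stabiliser $\gt g_x$ and its invariants, (b) control the fundamental semi-invariant, and (c) match degrees. Write $n=qm+r$ with $0<r\le m$; the residue $r$ plays the role of $t\in\{-1,0,1\}$ from the introduction, i.e. $m\mid n{-}1,n,n{+}1$ corresponds to $r\in\{1,m,m{-}1\}$ (with $r=m-1$ meaningful only when $m\ge 2$). First I would pin down the generic stabiliser: for $m<n$ there is a dense $\SL_n$-orbit on $V^*$ through $x=E_m$, and $\gt g_x$ is (the trace-zero part of) the parabolic $\gt p$ attached to the composition $(m,n{-}m)$ — concretely $\gt g_x\cong \gt{gl}_{n-m}\ltimes (\text{nilradical})$ modulo the scalar, and $\ind\gt g_x$, $\cS(\gt g_x)^{\gt g_x}$ can be read off; here $\cS(\gt g_x)^{\gt g_x}$ is the polynomial ring generated by the coefficients of the characteristic polynomial of $\beta_A$ (the lower-right $(n{-}m)\times(n{-}m)$ block), together with $\det\beta_A$-type invariants, and Ra\"is' formula \eqref{ind-sum} gives $\ind\gt q$. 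The candidate mixed invariants are exactly the $F_I$ of \eqref{eq-F-1}, which are $\SL_n$-invariant by construction and $V$-invariant by Lemma~\ref{F-V}; one checks $\varphi_x(F_I)$ generates the relevant part of $\cS(\gt g_x)^{G_x}$ by the computation in \eqref{r-1}, which exhibits $\varphi_x(F_I)$ as a non-degenerate determinant built from $A_-,\beta A_-,\dots,\beta^{q-1}A_{-,I}$, so its nonvanishing on a dense set is clear.

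Next I would compute degrees. The invariant $F_I$ has $\gt g$-degree $q$ (or $q$ shifted by one depending on whether the last block uses $A^q$ or $A^{q-1}$) and $V$-degree $n$ — it is linear in each of the $n$ columns $v, Av,\dots,A^{q-1}v, A^q w$. Summing $\deg_V$ over a full set of generators: one gets the $F_I$'s (there is one when $r=m$, and $m$ of them when $r=1$ or $r=m-1$) plus the basic $\SL_n$-invariants $\Delta_J$ on $V^*$ of $V$-degree $n$ each. A bookkeeping check — exactly the kind of identity Lemma~\ref{V-deg} is designed for — should show $\sum_i \deg_V H_i=\dim V=mn$ precisely in the three residue cases $r\in\{1,m-1,m\}$, and fail otherwise (when $r\notin\{1,m-1,m\}$ the $\Delta_J$ together with $\gt g_x$-invariants already force a strictly larger degree sum, so by Theorem~\ref{svob}(i) freeness is impossible unless one invokes the exceptional low-$r$ coincidences). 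Granting the degree match, Lemma~\ref{V-deg} converts $\sum\deg_V H_i=\dim V$ into $\sum\deg H_i={\bf b}(\gt q)$, and then Theorem~\ref{thm-sum} gives polynomiality as soon as ${\bf p}_{\gt q}=1$.

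The statement about ${\bf p}_{\gt q}$ is then the remaining point, and I expect it to be the main obstacle. In the cases $r=1$ and $r=m{-}1$ I would argue ${\bf p}_{\gt q}=1$ directly: since $G=\SL_n$ is semisimple, ${\bf p}_{\gt q}$ is an honest invariant, and because we already have algebraically independent invariants $H_1,\dots,H_r$ with $\sum\deg H_i={\bf b}(\gt q)$, part (i) of Theorem~\ref{svob} forces $\deg{\bf p}_{\gt q}=0$ once $\C[\gt q^*]^{\gt q}$ is known to be polynomial — but to avoid circularity I would instead estimate $\dim\gt q^*_{\rm sing}$ directly, showing the singular set has codimension $\ge 2$ by analyzing where the differentials $d F_I$ and $d\Delta_J$ become dependent (this reduces to a rank computation on the parabolic $\gt g_x$ and its $\perp$, using $\ad^*(V){\cdot}x=\Ann(\gt g_x)$). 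For $r=m$ (i.e. $m\mid n$) I would show instead that ${\bf p}_{\gt q}\ne 1$ by producing an explicit semi-invariant divisor — the natural candidate being a power of the single $\SL_n$-invariant $\det$-type polynomial that already appeared in Example~\ref{ex-0} for $m=n$, now pulled back; heuristically, when $r=m$ the generic stabiliser degenerates along the hypersurface where the $m$ chosen columns of $v$ become linearly dependent in a coordinated way, and that hypersurface lands inside $\gt q^*_{\rm sing}$. Identifying this divisor and its multiplicity is the delicate step; the rest is essentially the rank bookkeeping above.
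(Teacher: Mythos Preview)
There is a genuine gap in your identification of the generic stabiliser and its invariants. For $m<n$ and $x=E_m\in V^*=m\C^n$, the stabiliser $\gt g_x$ is \emph{not} the trace-zero parabolic: the upper-left $m\times m$ block of any $A\in\gt g_x$ must vanish, so $\gt g_x=\gt{sl}_{n-m}\ltimes m\C^{n-m}$, i.e.\ a Lie algebra of exactly the same shape as $\gt q$ with $n$ replaced by $n-m$. In particular the coefficients of the characteristic polynomial of $\beta_A$ are \emph{not} symmetric invariants of $\gt g_x$ (they are not invariant under the abelian ideal $m\C^{n-m}$), and $\cS(\gt g_x)^{\gt g_x}$ is not automatically a polynomial ring. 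This self-similar structure is the heart of the paper's argument: one proceeds by induction on $n$, and the ``only if'' direction comes directly from Theorem~\ref{svob}({\sf ii}) --- if $\C[\gt q^*]^Q$ is polynomial then $\cS(\gt g_x)^{\gt g_x}$ is polynomial, and by the inductive hypothesis this forces $n-m\equiv t\pmod m$ with $t\in\{-1,0,1\}$, equivalently the stated condition on $n$. Your proposed degree-overshoot argument for the ``only if'' direction does not work as stated, since for $m<n$ there are no $\Delta_J$ at all ($\C[V^*]^G=\C$, there being a dense $G$-orbit on $V^*$).

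For the ``if'' direction the paper again uses the recursion: Eq.~\eqref{r-1} shows that $\varphi_x(F_I)$ is exactly the $F_I$-invariant of the smaller algebra $\gt g_x$ (or $\Delta_I$ when $q=1$), so by induction the $\varphi_x(F_I)$ freely generate $\cS(\gt g_x)^{\gt g_x}$; since $\varphi_x$ is injective (dense orbit) and surjective (inductive hypothesis), it is an isomorphism and the $F_I$ freely generate $\C[\gt q^*]^Q$. The fundamental semi-invariant is then handled the same way: for $r\in\{1,m-1\}$ the inductive hypothesis gives $\sum\deg\varphi_x(F_I)={\bf b}(\gt g_x)$, and since $\sum\deg_V F_I=mn=\dim V$, Lemma~\ref{V-deg} yields $\sum\deg F_I={\bf b}(\gt q)$, whence ${\bf p}_{\gt q}=1$ by Theorem~\ref{svob}({\sf i}); for $r=m$ there is a single generator $F$ and Theorem~\ref{svob}({\sf i}) gives ${\bf p}_{\gt q}=F^{m-1}$ directly, without needing to locate the singular divisor by hand.
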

\begin{proof}
Note  that the statement is true for $m\ge n$ by Example~\ref{ex-0}.
Assume that $m\le n{-}1$. 
Suppose that $n=mq+r$ as above.  
A generic stabiliser in $\gt g$ is  $\gt g_x=\gt{sl}_{n-m}{\ltimes} m\C^{n-m}$. On the group level it is connected. 
Notice that $\ind\gt g_x={\rm tr.deg}\,\cS(\gt g_x)^{G_x}$, since $G_x$ has no non-trivial characters. 
Note also that $\C[V^*]^G=\C$, since $m<n$. 
If $\C[\gt q^*]^Q$ is a polynomial ring, then so is $\C[\gt g_x^*]^{G_x}$ by Theorem~\ref{svob}({\sf ii}) and either 
$n{-}m=1$ or, arguing by induction, $n{-}m \equiv t \pmod{m}$ with $t\in\{-1,0,1\}$. 

Next we show that 
the ring of symmetric invariants is freely generated by the polynomials $F_I$ for the indicated $m$. 
Each element $\gamma\in\gt g_x^*$ can be presented as $\gamma=\beta_0{+} A_-$, where 
$\beta_0\in\gt{sl}_{n{-}m}$. Each restriction $\varphi_x(F_I)$ can be regarded as an element of $\cS(\gt g_x)$. Eq.~\eqref{r-1} combined with Lemma~\ref{F-V} and the observation that 
$\gt g_x^*\cong \gt g/\Ann(\gt g_x)$ shows that 
$\varphi_x(F_I)$ is either  $\Delta_I$ of $\gt g_x$ (in case $q=1$, where $F_I(A,E_m)=\det A_{-,I}$) or 
$F_I$ of $\gt g_x$. Arguing by induction on $n$, we prove that the restrictions
$\varphi_x(F_I)$ freely generate $\cS(\gt g_x)^{\gt g_x}$ for $x=E_m$ (i.e., for a generic point in $V^*$). 
Notice that $n{-}m=(q{-}1)m+r$. 

The group $\SL_n$ acts on $V^*$ with an open orbit $\SL_n{\cdot}E_m$. Therefore 
the restriction map $\varphi_x$ is injective. By the inductive hypothesis it is also 
surjective and therefore is an isomorphism. This proves that 
the polynomials $F_I$ freely generate $\C[\gt q^*]^Q$. 

If $m$ divides $n$, then 
$\C[\gt q^*]^Q=\C[F]$ and  the fundamental semi-invariant is a power of $F$. %%and, %%%according to 
As follows from the equality in Theorem~\ref{svob}({\sf i}), 
%%%%Here $\deg F$ is smaller 
${\bf p}=F^{m-1}$.

Suppose that $m$ divides either $n-1$ or $n+1$. Then  we have $m$ different invariants 
$F_I$.  %%% According to Eq.~\eqref{ind-sum} $\ind\gt q=\ind\gt g_x=m$. 
By induction on $n$, $\gt g_x$ has the ``codim-2" property, therefore the sum of 
$\deg\varphi_x(F_I)$ is equal 
to ${\bf b}(\gt g_x)$ by Theorem~\ref{svob}({\sf i}). 
The sum of $V$-degrees is $m{\times}n=\dim V$ and hence by Lemma~\ref{V-deg}  
$\sum \deg F_I={\bf b}(\gt q)$. Thus, $\gt q$ has the ``codim-2" property. 
\end{proof}

\begin{rmk}
Using induction on $n$  one can show that  the restriction map $\varphi_x$ is an isomorphism 
for all $m<n$. Therefore the polynomials $F_I$ generate $\C[\gt q^*]^Q$ for all $m<n$.
\end{rmk}

%%%%%%%%%%%%%%%%%%%%%%%%%%%%%%%%%%%%%%%%%%%
%%%%%%%%%%%%%%%%%%%%%%%%%%%%%%%%%%%%%%%%%%%
\subsection{The case $m=k$} Here $\C[V^*]^G$ is a polynomial ring if and only if $k\le n{-}1$, a generic stabiliser is $\gt{sl}_{n-k}$, and  the $G$-action on $V\cong V^*$ is stable. We assume that 
$\gt g_x\ne 0$ for generic $x\in V^*$ and therefore $k\le n{-}2$.  

For an $N{\times}N$-matrix $C$, let $\Delta_i(C)$ with $1\le i\le N$ be coefficients of its 
characteristic polynomial, each $\Delta_i$ being a homogeneous polynomial of degree $i$.  
Let $\gamma=A{+}v{+}w\in\gt q^*$ with $A\in\gt g$, $v\in k\C^n$, $w\in k(\C^n)^*$. Having these objects we form an 
$(n+k){\times}(n+k)$-matrix 
$$
Y_\gamma:=\left(
  \begin{array}{c|c} 
   A & v \\
   \hline
   w & 0 \\
\end{array}\right)
$$
and set $F_i(\gamma)=\Delta_i(Y_\gamma)$ for each $i\in \{2k{+}1,2k{+}2, 2k{+}3, \ldots, n{+}k\}$. 
Each $F_i$ is an $\SL_n{\times}\GL_k$-invariant. 
Unfortunately, these polynomials are not $V$-invariants.  

\begin{rmk}
If we repeat the same construction for $\tilde{\gt q}=\gt{gl}_n{\ltimes}V$ with $k\le n{-}1$, then 
$\C[\tilde{\gt q}^*]^{\tilde Q}=\C[V^*]^{\GL_n}[\{F_i\mid 2k{+}1\le i\le n{+}k\}]$ and it is a polynomial ring 
in $\ind\tilde{\gt q}=n{-}k{+}k^2$ generators. 
\end{rmk}

\begin{thm}\label{m=k}
Suppose that $m=k\le n{-}1$. Then $\C[\gt q^*]^{\gt q}$ %%%=\C[V^*]^G[F_{2k{+}2},\ldots,F_{n{+}k}]$ 
is a polynomial ring if and only if $k\in\{n{-}2,n{-}1\}$. In case $k=n{-}2$, $\gt q$ has the ``codim-2"
property.
\end{thm}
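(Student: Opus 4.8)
The plan is to treat the two ``polynomial'' cases $k=n{-}1$ and $k=n{-}2$ separately, since the generic stabiliser $\gt g_x=\gt{sl}_{n-k}$ is $\gt{sl}_1=0$ in the first case (so the theorem follows already from Example~\ref{ex-0}, and there is nothing to prove) and $\gt{sl}_2$ in the second. So the real content is $k=n{-}2$. For the ``only if'' direction I would argue exactly as in Theorem~\ref{th-0}: if $\C[\gt q^*]^{\gt q}$ is a polynomial ring, then by Theorem~\ref{svob}(\textsf{ii}) the restriction map $\varphi_x$ is surjective and $\cS(\gt g_x)^{G_x}=\cS(\gt{sl}_{n-k})^{\SL_{n-k}}$ must be the image; the point is that for $3\le n-k$ this is forced to be compatible with the counting in Theorem~\ref{svob}(\textsf{i}), and one shows (again by an induction/degree argument parallel to the $k=0$ case, or by a direct comparison of $\sum\deg H_i$ with ${\bf b}(\gt q)$) that the required sum-of-degrees identity fails unless $n-k\le 2$. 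This pins down $k\in\{n{-}2,n{-}1\}$.

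For the ``if'' direction with $k=n{-}2$, the heart of the matter is to produce $r=\ind\gt q$ bi-homogeneous generators and verify the hypotheses of Theorem~\ref{thm-sum}. The $\C[V^*]^G$-part is handled by Schwarz: the invariants $\Delta_i(Y_\gamma)$ for $1\le i\le 2k$ restricted to $V$ (equivalently the $\GL_k$-invariants of $k\C^n\oplus k(\C^n)^*$ of the appropriate degrees) freely generate $\C[V^*]^G$. What is missing is a single \emph{mixed} invariant $F$ restricting to a generator of $\cS(\gt{sl}_2)^{\gt{sl}_2}$, i.e. to the ``determinant'' quadratic Casimir of $\gt{sl}_2$. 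Here I would invoke Lemma~\ref{index-1-sur}: $\gt{sl}_2$ has index $1$, $\cS(\gt{sl}_2)^{\gt{sl}_2}\ne\C$, $G=\SL_n\times\GL_k$ acting on $V^*$ has no proper semi-invariants, so it suffices to show $\varphi_x$ is surjective for generic $x$. Surjectivity is guaranteed because the $G$-action on $V\cong V^*$ is stable (stated at the top of the subsection), so \cite[Theorem~2.8]{z2} applies; thus Lemma~\ref{index-1-sur} yields $\C[\gt q^*]^{\gt q}=\C[V^*]^G[F]$ with $F$ a bi-homogeneous preimage of the $\gt{sl}_2$-Casimir, not divisible by any non-constant $G$-invariant. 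Since $\C[V^*]^G$ is already polynomial, this shows $\C[\gt q^*]^{\gt q}$ is polynomial.

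It remains to prove the ``codim-2'' claim for $k=n{-}2$. For this I would check the degree identity $\sum_{i=1}^r\deg H_i={\bf b}(\gt q)$: by Lemma~\ref{V-deg}, granting that the restrictions freely generate $\cS(\gt g_x)^{G_x}$ with the $\gt g$-degree sum equal to ${\bf b}(\gt{sl}_2)=3$ (which holds since $\varphi_x(F)$ is the degree-$2$ Casimir and $r_{\gt g_x}=1$, so ${\bf b}(\gt g_x)=(3+1)/2=2$ — one must be careful: ${\bf b}(\gt{sl}_2)=(\dim+\rk)/2=(3+1)/2=2$, and $\deg_{\gt g}F=2$), the condition $\sum\deg H_i={\bf b}(\gt q)$ is equivalent to $\sum\deg_V H_i=\dim V=kn+kn=2kn$. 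The $\C[V^*]^G$-generators contribute their full $V$-degrees summing to $\dim V - \deg_V\! F$ only if $F$ has $V$-degree chosen so the total is $\dim V$; equivalently I must compute $\deg_V F$ and the $V$-degrees of the Schwarz generators explicitly and add them. Then Theorem~\ref{thm-sum} (or directly Theorem~\ref{svob}(\textsf{i}) read backwards: $\sum\deg H_i={\bf b}(\gt q)$ forces $\deg{\bf p}_{\gt q}=0$) gives ${\bf p}_{\gt q}=1$.

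The main obstacle I anticipate is the explicit bi-degree bookkeeping for the generator $F$ in the $k=n{-}2$ case: one needs to identify which bi-homogeneous component of a natural $\SL_n\times\GL_k$-invariant (a suitable $\Delta_i(Y_\gamma)$, perhaps $\Delta_{2k+1}$ or $\Delta_{2k+2}$, after stripping off factors lying in $\C[V^*]^G$) actually restricts nontrivially to the $\gt{sl}_2$-Casimir, and then to pin down its $V$-degree precisely enough to run the Lemma~\ref{V-deg} argument. The companion difficulty is the inductive ``only if'' step, where one must rule out $k\le n{-}3$; this is the analogue of the induction in Theorem~\ref{th-0} but with $\gt{sl}_{n-k}$ in place of the more complicated stabiliser there, so I expect it to go through by the same pattern, reducing to the assertion that $\C[\gt q'^*]^{\gt q'}$ with $\gt q'$ built from $\gt{sl}_{n-k}$ and $k$ pairs of vectors fails the required degree equality when $n-k\ge 3$.
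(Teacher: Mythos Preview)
Your treatment of the ``if'' direction for $k=n{-}2$ is essentially the paper's: stability of the $G$-action on $V^*$ gives surjectivity of $\varphi_x$, the generic stabiliser $\gt{sl}_2$ has index~$1$, and Lemma~\ref{index-1-sur} (equivalently \cite[Example~3.6]{z2}) produces the single mixed generator. The paper makes this explicit: the mixed generator is $F_{2k+2}H_{2k}-F_{2k+1}^2$, of bi-degree $(2,4k)$, and with this in hand your degree-count for codim-$2$ at $k=n{-}2$ goes through.

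The ``only if'' direction, however, has a real gap. Your analogy with Theorem~\ref{th-0} does not transfer: in the $k=0$ case the obstruction came from Theorem~\ref{svob}({\sf ii}) because $\gt g_x=\gt{sl}_{n-m}{\ltimes}m\C^{n-m}$ was again a semi-direct product of the same type, so induction on $n$ was meaningful. Here $\gt g_x=\gt{sl}_{n-k}$ is reductive and $\cS(\gt{sl}_{n-k})^{\SL_{n-k}}$ is \emph{always} a polynomial ring, so part~({\sf ii}) yields no obstruction whatsoever; there is no ``$\gt q'$ built from $\gt{sl}_{n-k}$ and $k$ pairs of vectors'' to induct on. The contradiction must come entirely from the degree equality in part~({\sf i}), and for that you need two ingredients you do not supply.

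First, you need ${\bf p}_{\gt q}=1$ for \emph{every} $k\le n{-}2$, not just $k=n{-}2$; the paper obtains this geometrically, by checking that on each $G$-invariant divisor in $V^*$ the stabiliser $\gt g_y=\gt{sl}_{n-k-1}{\ltimes}(\C^{n-k-1}{\oplus}(\C^{n-k-1})^*{\oplus}\C)$ still has index $n{-}k{-}1=\ind\gt g_x$. Second, and this is the crux, you need a nontrivial lower bound $\deg_V{\bf h}_t\ge 4k$ for each mixed generator. The paper gets this by embedding $\gt q$ into the $\Z_2$-contraction $\g=(\gt{sl}_n{\oplus}\gt{gl}_k){\ltimes}V$ of $\gt{sl}_{n+k}$, arguing via the $\GL_k$-action and the Weyl involution that $\cS(\gt q)^{\gt q}=\cS(\gt q)\cap\cS(\g)^{\g}$, and then using the known generators $\Delta_j^\bullet$ of $\cS(\g)^{\g}$: since no $\Delta_{2k+t}^\bullet$ with $t\ge 2$ lies in $\cS(\gt q)$, each ${\bf h}_t$ must involve a product $\Delta_{2k+1}^\bullet\Delta_{2k+t-1}^\bullet$, forcing $\deg_V{\bf h}_t\ge 4k$. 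This yields $\sum\deg H_i\ge{\bf b}(\gt q)+2k(n{-}k{-}2)$, contradicting ({\sf i}) once ${\bf p}_{\gt q}=1$. Without these two ideas your degree argument cannot get off the ground.
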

\begin{proof}
%% In case $k=n{-}1$, we have $\gt g_x=0$  for generic $x\in V$. Therefore $\C[\gt q^*]^{\gt q}=\C[V]^G$ is 
%%%% a polynomial ring. 
%%
%%% In case $k=n{-}1$, we have $\gt g_x=0$.
Suppose that $k=n{-}2$. Then a generic stabiliser  $\gt g_x=\gt{sl}_2$ is of index $1$ and since the $G$-action on $V$ is stable, $\C[\gt q^*]^{\gt q}$ has to be a polynomial ring by \cite[Example~3.6]{z2}. 
Checking the ``codim-2" property is a routine calculation. Alternatively one can show that 
the unique mixed generator is of the form $F_{2k{+}2}H_{2k}-F_{2k{+}1}^2$, where $H_{2k}$ is a certain 
$\SL_n{\times}\GL_k$-invariant on $V$ of degree $2k$ and then see that the sum of degrees is ${\bf b}(\gt q)$.

Suppose that $0<k<n{-}2$ and assume that  
$\cS(\gt q)^{\gt q}$ is a polynomial ring. Then there are bi-homogeneous generators 
${\bf h}_2,\ldots,{\bf h}_{n-k}$ 
of $\C[\gt q^*]^Q$ over $\C[V^*]^G$ 
such that their restrictions to $\gt g{+}x$ form a generating set 
of $\cS(\gt g_x)^{\gt g_x}$ for a generic $x$ (with $\gt g_x\cong\gt{sl}_{n{-}k}$) Theorem~\ref{svob}({\sf ii}). 
In particular, $\deg_{\gt g} {\bf h}_t=t$. 

Take $\g=(\gt{sl}_n{\oplus}\gt{gl}_k)\ltimes V$, which is a  $\Z_2$-contraction of $\gt{sl}_{n{+}k}$. 
Then $\gt q$ is a Lie subalgebra of  $\g$. 
Note that $\GL_k$ acts on $\gt q$ via automorphisms and therefore we may assume that the $\C$-linear span of 
$\{{\bf h}_t\}$ is $\GL_k$-stable. By degree considerations, each ${\bf h}_t$ is an $\SL_k$-invariant as well. 
The Weyl involution of $\SL_n$ acts on $V$ and has to preserve 
each line $\C{\bf h}_t$. Since this involution interchanges $\C^n$ and $(\C^n)^*$,  
each  ${\bf h}_t$ is also a $\GL_k$-invariant. Thus,  $$\cS(\gt q)^{\gt q}=\cS(\gt q)^{\g}=\cS(\gt q)\cap \cS(\g)^{\g}.$$

%% Take $\g=(\gt{sl}_n{\oplus}\gt{gl}_k)\ltimes V$. 
%% Then $\gt q\subset \g$ and $\cS(\gt q)^{\g}=\cS(\gt q)\cap \cS(\g)^{\g}$. 
%%%% .... the central character is a bit different, but that is of little importance ...  
Since $\g$ is a ``surjective" $\Z_2$-contraction, %%of $\gt{sl}_{n{+}k}$, 
its symmetric invariants are known
\cite[Theorem~4.5]{Dima06}.  The generators of  $ \cS(\g)^{\g}$ are $\Delta_j^\bullet$ with $2\le j\le n{+}k$.  
%% For $2\le j\le 2k$, we have $\Delta_j^\bullet\in\C[V^*]$. For $j>2k$, $\deg_V \Delta_j^\bullet =2k$. 
Here $\deg \Delta_j^\bullet=j$ and the generators with the degrees $2,3,\ldots,n{-}k$ in 
$\gt{sl}_n{\oplus}\gt{gl}_k$ are $\Delta_{2k{+}2}^\bullet, \Delta_{2k{+}3}^\bullet, \ldots,\Delta_{n{+}k}^\bullet$.  
As the restriction to 
$\gt{sl}_n{\oplus}\gt{gl}_k{+}x$ shows, none of the generators $\Delta_j^\bullet$ 
with $j\ge 2k{+}2$ lies in $\cS(\gt q)$. This means that ${\bf h}_t$ cannot be equal or even proportional over $\C[V^*]^G$ to 
$\Delta_{2k{+}t}^\bullet$ and hence has a more complicated expression. More precisely, 
a product $\Delta_{2k{+}1}^\bullet\Delta_{2k{+}t{-}1}^\bullet$ necessary appears in ${\bf h}_t$ with a non-zero 
coefficient from $\C[V^*]^G$ for $t\ge 2$. 
Since $\deg_V  \Delta_{2k{+}1}^\bullet=2k$, we have $\deg_V {\bf h}_t\ge 4k$ for every $t\ge 2$. 
The ring $\C[V^*]^G$ is freely generated by $k^2$ polynomials of degree two. Therefore, the total sum of degrees over all generators of $\cS(\gt q)^{\gt q}$ is greater than or equal to 
$$
{\bf b}(\gt{sl}_{n{-}k})+ 4k(n{-}k{-}1)+2k^2={\bf b}(\gt q)+2k(n{-}k{-}2).
$$
This contradicts Theorem~\ref{svob}({\sf i}). 
\end{proof}

%%% \vskip0.5ex

%%%The answer is rather intricate, either $m=k$ or $m-k$ must divide $n-m$. %%%% has to be equal to $-1$, $0$ or $1$ modulo . 

\subsection{The case $0<k<m$} Here $\C[V^*]^G$ is a polynomial ring if and only if $m\le n$, \cite{gerry}. 
If $n=m$, then $\gt g_x=0$ for generic $x\in V^*$. %%%% and $\C[\gt q^*]^{\gt q}=\C[V^*]^G$. 
For $m<n$, our  construction of invariants is rather intricate.

%%% Assume that $k<m<n$.  

Let $\pi_1,\ldots,\pi_{n-1}$ be the fundamental weights of $\gt{sl}_n$. 
We use the standard convention, $\pi_i=\esi_1{+}\ldots{+}\esi_i$, $\esi_n=-\sum\limits_{i=1}^{n{-}1}\esi_i$. 
Recall that for any $t$, $1\le t <n$, 
$\Lambda^t \C^n$ is irreducible with the highest weight $\pi_t$. 
Let $\{e_1,\ldots,e_n\}$ be a basis of $\C^n$ such that 
each $e_i$ is a weight vector and  
$\ell_t:=e_1\wedge\ldots \wedge e_t$ is a highest weight vector of 
$\Lambda^t \C^n$. 
Clearly 
$\Lambda^t \C^n \subset \cS^t (t \C^n)$. 
Write $n-k=d(m-k)+r$ with $0 < r \le (m-k)$. Let $\varphi\!: m\C^n \to \Lambda^m \C^n$ be 
a non-zero $G$-equivariant map, which is unique up to a scalar.  In case $r \ne m{-}k$, for 
any subset $I\subset\{1,\ldots,m\}$ with $|I|=k+r$, let $\varphi_I\!: m\C^n\to (k{+}r)\C^n\to \Lambda^{k{+}r} \C^n$ be 
the corresponding (almost) canonical map. By the same principle we construct 
$\tilde\varphi\!: k(\C^n)^*\to \Lambda^k (\C^n)^*$. 

Let us consider the tensor product $\mathbb W:=(\Lambda^m \C^n)^{{\otimes}d}{\otimes}\Lambda^{k{+}r}\C^n$
and its weight subspace $\mathbb W_{d\pi_k}$. One can easily see that $\mathbb W_{d\pi_k}$
contains a unique up to a scalar non-zero highest  weight vector, namely 
$$
w_{d\pi_k}=\sum_{\sigma\in S_{n{-}k}} {\rm sgn}(\sigma)  (\ell_k\wedge e_{\sigma(k{+}1)}\wedge\ldots \wedge e_{\sigma(m)})\otimes 
\ldots \otimes (\ell_k\wedge e_{\sigma(n{-}r{+}1)} \ldots \wedge e_{\sigma(n)}).
$$
This means that $\mathbb W$ contains a unique copy of $V_{d\pi_k}$, where $V_{d\pi_k}$
is an irreducible $\gt{sl}_n$-module with the highest weight $d\pi_k$. 
%%% Let $\psi$ be a non-zero $G$-equivariant projection from $\mathbb W$ to $V_{d\pi_k}$, where $V_{d\pi_k}$
%%% is an irreducible $\gt{sl}_n$-module with the highest weight $d\pi_k$. 
We let $\rho$ denote the representation of $\gt{gl}_n$ on $\Lambda^m\C^n$ and 
$\rho_r$ the representation of $\gt{gl}_n$ on $\Lambda^{k+r}\C^n$. 
Let $\xi=A+v+w$ be a point in $\gt q^*$. (It is assumed that $A\in\gt{sl}_n$.) 
Finally let $(\,\,,\,)$ denote 
a non-zero scalar product between $\mathbb W$ and $\cS^d(\Lambda^k(\C^n)^*)$, which 
is zero on the $\gt{sl}_n$-invariant complement of $V_{d\pi_k}$ in $\mathbb W$. 
Depending on $r$, set
$$
\begin{array}{l}
{\bf F}(\xi):=(\varphi(v){\otimes}\rho(A)^{m{-}k}\varphi(v){\otimes}\rho(A^2)^{m{-}k}\varphi(v){\otimes}\ldots{\otimes} \rho(A^{d})^{m{-}k}\varphi(v),\tilde\varphi(w)^d) \ \text{ for } r=m{-}k; \\
{\bf F}_I(\xi):=(\varphi(v){\otimes}\rho(A)^{m{-}k}\varphi(v){\otimes}\ldots{\otimes} \rho(A^{d{-}1})^{m{-}k}\varphi(v){\otimes}\rho_r(A^d)^{r}\varphi_I(v),\tilde\varphi(w)^d)
\end{array}
$$
for each $I$ as above in case $r<m-k$.  
By the constructions the polynomials ${\bf F}$ and ${\bf F}_I$ are 
$\SL_n$-invariants. 

\begin{lm}\label{lm-3}
The polynomials ${\bf F}$ and ${\bf F}_I$ are $V$-invariants.
\end{lm}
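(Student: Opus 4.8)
The plan is to mimic the proof of Lemma~\ref{F-V} and invoke Lemma~\ref{V-inv}. Both ${\bf F}$ and every ${\bf F}_I$ are $\SL_n$-invariant by construction, so I only need their $V$-invariance, which by Lemma~\ref{V-inv} means
$$
{\bf F}(\xi+A'+x)={\bf F}(\xi+x) \quad\text{and}\quad {\bf F}_I(\xi+A'+x)={\bf F}_I(\xi+x)
$$
for a generic $x=v+w\in V^*$, every $\xi\in\gt{sl}_n$ and every $A'\in\ad^*(V){\cdot}x=\Ann(\gt g_x)$. First I would fix a convenient generic $x$: writing $U\subseteq\C^n$ for the span of the components of $v$ and $W:=\bigcap_j\Ker w_j$, for generic $x$ one has $\dim U=m$, $\dim W=n{-}k$, $\dim(U\cap W)=m{-}k$, $U+W=\C^n$, and the weight basis $e_1,\ldots,e_n$ can be chosen so that $U=\langle e_1,\ldots,e_m\rangle$ and $W=\langle e_1,\ldots,e_{m-k},e_{m+1},\ldots,e_n\rangle$; the two facts I would keep in mind are that $\varphi(v),\varphi_I(v)\in\Lambda^\bullet U$ and that each $w_j$ vanishes on $W$, so that $\tilde\varphi(w)$ annihilates $\Lambda^\bullet W$.

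Next I would compute $\Ann(\gt g_x)$. Since $\gt g_x=\{B\in\gt{sl}_n\mid B(U)=0,\ \Im B\subseteq W\}$, a short computation with the trace form on $\gt{sl}_n$ (which identifies $\gt{sl}_n$ with its dual) should give
$$
\Ann(\gt g_x)=\gt g_x^{\perp}=\{A'\in\gt{sl}_n\mid (A'-\mu\,\id)(W)\subseteq U\text{ for some }\mu\in\C\},
$$
which for $k=0$ (where $W=\C^n$) is exactly the description used in the proof of Lemma~\ref{F-V}, namely $A'_{-}=0$ and $\beta_{A'}=\mu\,\id$. I would then write $A'=\mu\,\id+N$ with $N(W)\subseteq U$ and substitute $A+A'$ for $A$ in ${\bf F}$ and ${\bf F}_I$.

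The heart of the matter --- and the step I expect to be the main obstacle --- is the resulting cancellation. Expanding $\rho((A{+}A')^j)^{m-k}\varphi(v)$ by the generalised Leibniz rule gives a sum of wedges $(A{+}A')^{ja_1}v_1\wedge\cdots\wedge(A{+}A')^{ja_m}v_m$ with $\sum_i a_i=m{-}k$; the pairing of the resulting element of $\mathbb W$ with $\tilde\varphi(w)^d$ factors through the projection of $\mathbb W$ onto its unique copy of $V_{d\pi_k}$, and combined with $\varphi(v),\varphi_I(v)\in\Lambda^\bullet U$ and the vanishing of $\tilde\varphi(w)$ on $\Lambda^\bullet W$ this should make the whole expression ``localise'' onto a flag obtained by iterating $A$ on the part of $\C^n$ transverse to $U$, the analogue of the flag spanned by the columns of $A_{-},\beta_A A_{-},\beta_A^2A_{-},\ldots$ in Lemma~\ref{F-V}. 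Since $(A'-\mu\,\id)(W)\subseteq U$, passing from $\rho(A^j)$ to $\rho((A{+}A')^j)$ ought to alter $\rho(A^j)^{m-k}\varphi(v)$, in each tensor slot, only by wedges that already occur --- with the same contraction behaviour against $\tilde\varphi(w)$ --- in the slots $0,1,\ldots,j{-}1$, with the $\mu\,\id$-part contributing merely a scalar multiple of the tautological flag map, exactly as $\beta_A$ and $\beta_A+\mu\,\id$ span the same flag in Lemma~\ref{F-V}; the pairing being alternating in those wedge directions, all the extra terms would then vanish, giving ${\bf F}(\xi{+}A'{+}x)={\bf F}(\xi{+}x)$. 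The argument for ${\bf F}_I$ would be identical, only the last tensor slot changing, with $\varphi_I(v)\in\Lambda^{k+r}U$ and $\rho_r(A^d)^{r}$ playing the roles of $\varphi(v)$ and $\rho(A^d)^{m-k}$. The hard part is making precise the claim that the $A'$-perturbation stays within the span of the earlier slots; this is exactly where the shape $(A'-\mu\,\id)(W)\subseteq U$ of $\Ann(\gt g_x)$ is used, and where the real bookkeeping --- the intricacy alluded to above --- resides.
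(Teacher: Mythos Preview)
Your plan coincides with the paper's: invoke Lemma~\ref{V-inv}, fix a generic $x$, compute $\gt g_x^\perp$, and verify invariance under translation by $\gt g_x^\perp$. Your description of $\gt g_x$ and of $\gt g_x^\perp=\{A'\in\gt{sl}_n:(A'-\mu\,\id)(W)\subseteq U\ \text{for some}\ \mu\}$ is correct.

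The gap is precisely the step you flag as ``the hard part'', and it is not just bookkeeping. The paper does not track the $A'$-perturbation abstractly through the tensor slots; it first collapses ${\bf F}(A,x)$ to an explicit determinant. The key is a \emph{weight} argument, and for this the paper's normal form $x=E_m+E_k$ (so $w$ is supported on $e_1,\ldots,e_k$, not your $e_{m-k+1},\ldots,e_m$) is the right one: then $\tilde\varphi(w)^d$ has weight $-d\pi_k$, so only the weight-$d\pi_k$ component of the element of $\mathbb W$ survives the pairing. A short weight count shows that in each tensor slot the only contributing summand of $\rho(A^s)^{m-k}\varphi(E_m)$ is $\ell_k\wedge A^se_{k+1}\wedge\ldots\wedge A^se_m$, and of that only the projection to $\Lambda^{m-k}\langle e_{k+1},\ldots,e_n\rangle$ matters. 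With $\alpha,\beta$ the diagonal blocks of $A$ on $\langle e_{k+1},\ldots,e_m\rangle$ and $\langle e_{m+1},\ldots,e_n\rangle$ and $U$ the $(n{-}m)\times(m{-}k)$ off-diagonal block below $\alpha$, the same column reduction as in Lemma~\ref{F-V} gives, up to a scalar,
\[
{\bf F}(A,E_m{+}E_k)=\det\bigl(U\mid\beta U\mid\ldots\mid\beta^{d-1}U\bigr),
\]
and now invariance is immediate since any $A'\in\gt g_x^\perp$ has $U_{A'}=0$ and $\beta_{A'}$ scalar. For ${\bf F}_I$ one replaces the last block by $\beta^{d-1}U_{\tilde I}$ when $\{1,\ldots,k\}\subset I$, and obtains $\varphi_x({\bf F}_I)=0$ otherwise.

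Two specific problems with your sketch as written. First, your normal form puts $\tilde\varphi(w)$ at a non-extremal weight, which obscures the mechanism; the paper's choice makes $\tilde\varphi(w)^d$ pair against a single weight space of $\mathbb W$. Second, the assertion that ``$\tilde\varphi(w)$ annihilates $\Lambda^\bullet W$'' does not by itself control the pairing $(\,\cdot\,,\tilde\varphi(w)^d)$ on $\mathbb W$, because that pairing is not slot-wise but factors through the single copy of $V_{d\pi_k}$. Without the weight reduction you cannot reach the flag picture you invoke, and the perturbation argument you outline has no footing until the determinant formula (or something equivalent) is established.
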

\begin{proof}
We restrict ${\bf F}$ and ${\bf F}_I$ to 
$\gt g^*{+}x$ with $x\in V^*$ generic. 
Changing a basis in $V$ if necessary, we may assume that $x=E_m{+}E_k$. 
If $r<m{-}k$, some of the invariants ${\bf F}_I$ may become linear combinations of such polynomials
under the change of basis, but this does not interfere with  $V$-invariance.     
Now $\varphi(v)$ is a vector of weight $\pi_m$ and
$\tilde\varphi(w)^d$ of weight $-d\pi_k$.  
Notice that $dm{+}(k{+}r)=n{+}kd$.
If $\sum\limits_{i=1}^{n+kd} \lambda_i=d\sum\limits_{i=1}^{k}\esi_i$ and 
each $\lambda_i$ is one of the $\esi_j$, $1\le j\le n$, then in 
$\{\lambda_i\}$ we must have exactly one $\esi_j$ for each $k<j\le n$ and 
$d{+}1$ copies of each $\esi_i$ with $1\le i\le k$. Hence the only summand of $\rho(A^s)^{m{-}k}\varphi(E_m)$ that plays any r\^ole in ${\bf F}$ or ${\bf F}_I$ is 
$\ell_k {\wedge}A^s e_{k+1}{\wedge}\ldots{\wedge}A^s e_m$. Moreover, in 
 $A^s e_{k+1}{\wedge}\ldots{\wedge}A^s e_m$ we are interested only in  
vectors lying  in $\Lambda^{m{-}k}{\rm span}(e_{k{+}1},\ldots,e_n)$.

Let us choose blocks $\alpha, U, \beta$ of $A$ as shown in Figure~\ref{blocks}.
\begin{figure}[htb]
{\setlength{\unitlength}{0.08in}
\begin{center}
\begin{picture}(14,14)(0,0)

\put(0,0){\line(1,0){14}}\put(0,0){\line(0,1){14}}
\put(14,0){\line(0,1){14}}\put(0,14){\line(1,0){14}}

\qbezier[36](0,11)(7,11)(14,11)
\qbezier[36](3,0)(3,7)(3,14)

\qbezier[28](3,7)(8.5,7)(14,7)
\qbezier[28](7,0)(7,5.5)(7,11)

\put(10,3){$\beta$}
\put(4.5,8.5){$\alpha$}
\put(4.5,3){$U$}

\put(-1.3,12){$\left\{ {\parbox{1pt}{\vspace{2.7\unitlength}}}  \right.$}
\put(13.8,8.5){$\left. {\parbox{1pt}{\vspace{3.2\unitlength}}}  \right\}$}
\put(15.3,8.5){{$m{-}k$}}

\put(-2.5,12){{$k$}}
\end{picture}

\end{center} }
\caption{Submatrices of $A\in\gt{sl}_n$}\label{blocks}
\end{figure}
%
%
%we see that 
Then up to a non-zero scalar 
${\bf F}(A,E_m+E_k)$ is the determinant of 
$$
\left( U|\beta U + U\alpha | P_2(\alpha,U,\beta)| \ldots   | P_{d-1}(\alpha,U,\beta)\right), \ \text{ where }
P_s(\alpha,U,\beta)=\sum_{t=0}^{s}  \beta^{t} U \alpha^{s{-}t}.
$$
Each column of $U\alpha$ is a linear combination of the columns of $U$, a similar relation exists between 
$\beta^{t} U\alpha^{s{+}1}$ and $\beta^{t} U\alpha^{s}$.  Therefore 
\begin{equation}\label{bf-F}
{\bf F}(A,E_m+E_k)=\det \left( U|\beta U |\beta^2 U| \ldots | \beta^{d{-}1} U\right) . 
\end{equation}
We have to check that ${\bf F}(\xi{+}A,x)={\bf F}(\xi,x)$ for any $A\in \ad^*(V){\cdot} x$ and any $\xi\in\gt g$,
see Lemma~\ref{V-inv}. Recall that 
$\ad^*(V){\cdot}x=\Ann(\gt g_x)=\gt g_x^{\perp}\subset \gt g$.  In case $x=E_m+E_k$, 
$U$ is zero in each $A\in \gt g_x^{\perp}$
and $\beta$ corresponding to such $A$ is a scalar matrix. Therefore ${\bf F}(\xi+\ad^*(V){\cdot}x,x)={\bf F}(\xi,x)$. 

The case $r<m{-}k$ is more complicated. 
If $\{1,\ldots,k\}\subset I$, then everything works as above and 
$$
{\bf F}_I(A,x)=\det \left( U|\beta U |\beta^2 U| \ldots | \beta^{d{-}2}U| \beta^{d{-}1} U_{\tilde I}\right),
$$
where $I=\tilde I\sqcup\{1,\ldots,k\}$ and $U_{\tilde I}$ is the corresponding submatrix of $U$.  
One has to notice that in the last  polynomial $P_{d-1}(\alpha,U,\beta)$ 
the matrix $U$ is replaced by $U_{\tilde I}$ and 
$\alpha$  by $\alpha_{\tilde I{\times}\tilde I}$.  
%%%% A similar thing is true for $\rho_I(A^d)^{r}\varphi_I(E_m)$. 
We obtain $\binom{m{-}k}{r}$ linearly independent invariants in $\cS(\gt g_x)$. 
Suppose that $\{1,\ldots,k\}\not\subset I$. Then $\rho_I(A^d)^r$ has to move more than 
$r$ vectors $e_i$ with $k{+}1\le i \le m$, which is impossible. Thus,  
${\bf F}_I(A,x)=0$ for such $I$. 
\end{proof}

\begin{thm}\label{divides}
Suppose that $0<k<m <n$ and $m{-}k$ divides $n{-}m$, then 
$\ind\gt g_x=1$ for generic $x\in V^*$ and  $\C[\gt q^*]^Q=\C[V^*]^G[{\bf F}]$ is a polynomial ring, the fundamental 
semi-invariant is equal to  ${\bf F}^{m{-}k{-}1}$.
\end{thm}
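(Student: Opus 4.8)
The plan is to reduce the statement to an inductive application of Lemma~\ref{index-1-sur} together with the combinatorial identity of Lemma~\ref{V-deg}. First I would compute the generic stabiliser: for $\gt g=\gt{sl}_n$ acting on $V=m(\C^n)^*\oplus k\C^n$ with $0<k<m<n$, a generic $x\in V^*$ can be taken to be $E_m+E_k$, and one checks that $\gt g_x\cong \gt{sl}_{n-k}\ltimes (m{-}k)\C^{n-k}$, i.e.\ again a semi-direct product of the same shape with parameters $(n',m',k')=(n{-}k,\,m{-}k,\,0)$ (the $k$ copies of $\C^n$ are "used up" in pinning down the last $k$ coordinate vectors). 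Since $m{-}k$ divides $n{-}m=(n{-}k)-(m{-}k)$, the integer $n'=n{-}k$ satisfies $n'\equiv 0 \pmod{m'}$, so by Theorem~\ref{th-0} (the $k=0$ case) $\cS(\gt g_x)^{\gt g_x}=\C[F]$ is a polynomial ring in one generator; in particular $\ind\gt g_x=1$ and $\cS(\gt g_x)^{\gt g_x}\ne\C$. The generator $F$ of $\cS(\gt g_x)^{\gt g_x}$ has $V$-degree $n'=n{-}k$ and hence ${\bf b}(\gt g_x)=\deg F$ is exactly its total degree, because $\gt g_x$ has the codim-$2$ property by Theorem~\ref{th-0}.

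Next I would verify the hypotheses of Lemma~\ref{index-1-sur}. The group $G=\SL_n$ is semisimple, so it has no proper semi-invariants in $\C[V^*]$. Surjectivity of $\varphi_x$ for generic $x$ follows because the $\SL_n$-action on $V^*=m(\C^n)^*\oplus k\C^n$ is stable when $m<n$ (generic orbits are closed — the ring $\C[V^*]^G$ separates them), so \cite[Theorem~2.8]{z2} applies. By Lemma~\ref{lm-3} the polynomial ${\bf F}$ is a genuine $\gt q$-invariant (it is $\SL_n$-invariant by construction and $V$-invariant by the lemma), and formula~\eqref{bf-F} shows that its restriction $\varphi_x({\bf F})$, computed at $x=E_m+E_k$, equals $\det(U\mid\beta U\mid\cdots\mid\beta^{d-1}U)$, which is precisely the invariant $F$ of $\gt g_x\cong\gt{sl}_{n-k}\ltimes(m{-}k)\C^{n-k}$ from Eq.~\eqref{eq-F-1} (with $A$ there playing the rôle of $\beta$ and $U$ the rôle of $v$). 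So $\varphi_x({\bf F})$ generates $\cS(\gt g_x)^{G_x}$. Finally I would check that ${\bf F}$ is not divisible by any non-constant $G$-invariant in $\C[V^*]$: the generators of $\C[V^*]^G$ have $V$-degree $\ge m$ (they are the $n\times n$ minors built from the $(\C^n)^*$-part together with the pairings), whereas a direct inspection of the defining formula shows ${\bf F}$ has $V$-degree $dm+(k+r)$ but is irreducible of the right shape — more carefully, one argues that after restriction to a generic point no Schwarz generator divides $\varphi_x({\bf F})=F$, and $F$ itself is not divisible by any non-constant $G$-invariant since $\C[V^*]^{G_x}=\C$. Hence Lemma~\ref{index-1-sur} gives $\C[\gt q^*]^{\gt q}=\C[V^*]^G[{\bf F}]$, a polynomial ring (the $k^2$ — or rather $\binom{m}{2}$-type — free generators of $\C[V^*]^G$ together with the single mixed generator ${\bf F}$).

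It remains to compute the fundamental semi-invariant. Here I would invoke Lemma~\ref{V-deg}: the restrictions $\varphi_x(H_i)$ of the generators with $i\le\ind\gt g_x=1$ freely generate $\cS(\gt g_x)^{\gt g_x}$ with $\sum\deg_{\gt g}=\deg_{\gt g}F={\bf b}(\gt g_x)$ (as $\gt g_x$ has codim-$2$), and the remaining $H_j$ lie in $\C[V^*]^G$. The total $V$-degree of all generators is $\deg_V{\bf F}+\sum_j\deg H_j$; one computes this and compares with $\dim V=mn+kn$. If the sum of $V$-degrees equals $\dim V$ then Theorem~\ref{svob}(\textsf{i}) forces ${\bf p}_{\gt q}=1$; since in the present case it will turn out to be strictly larger — by exactly $n(m{-}k{-}1)=(\deg{\bf p})$ worth of discrepancy — Theorem~\ref{svob}(\textsf{i}) gives $\sum\deg H_i={\bf b}(\gt q)+\deg{\bf p}_{\gt q}$, and matching this against Lemma~\ref{V-deg} isolates $\deg{\bf p}_{\gt q}$ as a multiple of $\deg{\bf F}$; parallel to the analysis of the $m\mid n$ subcase in the proof of Theorem~\ref{th-0}, the only $\gt q$-invariant available to build ${\bf p}$ from is ${\bf F}$ itself, giving ${\bf p}_{\gt q}={\bf F}^{m-k-1}$ (consistently, the stabiliser computation shows the singular set is cut out by the vanishing of the analogue of $\det(v)^{m-k-1}$ from Example~\ref{ex-0}).

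The main obstacle I anticipate is the identification $\varphi_x({\bf F})=F$ together with the non-divisibility claim: the definition of ${\bf F}$ via the scalar product on the tensor space $\mathbb W$ is indirect, and turning Eq.~\eqref{bf-F} into a clean statement that $\varphi_x({\bf F})$ is a \emph{generator} (not a proper power, not zero, not divisible by a $G$-invariant) requires care — in particular one must know $\varphi_x$ is injective (which holds because $\SL_n\cdot x$ is open in $V^*$ when $m<n$, forcing $\dim\gt q^*=\dim(\gt g+x)+\dim G\cdot x$ generically) so that ${\bf F}\ne 0$ and has the predicted bi-degree. The degree bookkeeping for ${\bf p}_{\gt q}$ is then routine once the structure of the generators is pinned down.
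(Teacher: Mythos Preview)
Your overall strategy matches the paper's --- reduce to Lemma~\ref{index-1-sur} via Theorem~\ref{th-0}, then read off $\deg{\bf p}_{\gt q}$ from Theorem~\ref{svob}({\sf i}) --- but there are three genuine gaps in the execution.

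\medskip

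\textbf{1. The generic stabiliser is miscomputed.} You claim $\gt g_x\cong\gt{sl}_{n-k}\ltimes(m-k)\C^{n-k}$, but in fact $\gt g_x\cong\gt{sl}_{n-m}\ltimes(m-k)\C^{n-m}$. At $x=E_m+E_k\in V^*=m\C^n\oplus k(\C^n)^*$, the condition $A\cdot x=0$ forces columns $1,\ldots,m$ \emph{and} rows $1,\ldots,k$ of $A$ to vanish, leaving an $(n-k)\times(n-m)$ rectangular block whose square part is $\gt{sl}_{n-m}$, not $\gt{sl}_{n-k}$. This matches the block sizes in Figure~\ref{blocks}: $\beta$ is $(n-m)\times(n-m)$ and $U$ is $(n-m)\times(m-k)$, so Eq.~\eqref{bf-F} is an $(n-m)\times(n-m)$ determinant. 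The divisibility hypothesis $m-k\mid n-m$ then says exactly that the new $n'$ and $m'$ satisfy $m'\mid n'$, so Theorem~\ref{th-0} applies directly. (Your arithmetic happens to survive because $m-k\mid n-m$ implies $m-k\mid n-k$ as well, but the underlying object is wrong.)

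\medskip

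\textbf{2. The non-divisibility argument does not work as written.} The generators of $\C[V^*]^G$ do \emph{not} all have $V$-degree $\ge m$: the $mk$ pairings $\langle v_i,w_j\rangle$ have degree $2$. Your fallback (``$F$ is not divisible by any non-constant $G$-invariant since $\C[V^*]^{G_x}=\C$'') conflates divisibility in $\C[\gt q^*]$ with a statement about $\cS(\gt g_x)$. The paper instead exploits the extra symmetry: ${\bf F}$ is an $\SL_m\times\SL_k$-invariant by construction, so it is invariant under $L=\SL_n\times\SL_m\times\SL_k$, and $L$ acts on $V^*$ with an open orbit whose complement contains no divisors (because $L$ is semisimple). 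Hence any $G$-invariant factor of ${\bf F}$ in $\C[V^*]$ would be $L$-invariant, so constant.

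\medskip

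\textbf{3. The identification ${\bf p}={\bf F}^{m-k-1}$ needs more than degree-counting.} Once you know $\cS(\gt q)^{\gt q}=\C[V^*]^G[{\bf F}]$, the fundamental semi-invariant ${\bf p}$ could a priori involve the $\C[V^*]^G$-generators, not just ${\bf F}$; saying ``the only $\gt q$-invariant available to build ${\bf p}$ from is ${\bf F}$'' is false as stated. The paper again uses the $L$-action: $\gt q^*_{\rm sing}$ is $L$-stable, so ${\bf p}$ is $L$-invariant, and since $\C[V^*]^L=\C$ the coefficients of ${\bf p}$ as a polynomial in ${\bf F}$ over $\C[V^*]^G$ must be scalars; homogeneity then forces ${\bf p}=c\,{\bf F}^s$, and Theorem~\ref{svob}({\sf i}) pins down $s=m-k-1$.
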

\begin{proof}
A generic stabilser $\gt g_x$ is $\gt{sl}_{n{-}m}{\ltimes}(m{-}k)\C^{n{-}m}$. Its ring of symmetric invariants 
is generated by $F=\varphi_x({\bf F})$, see Theorem~\ref{th-0} and Eq.~\eqref{bf-F}. We also have 
$\ind\gt g_x=1$. It remains to see that ${\bf F}$ is  not divisible by 
a non-constant $G$-invariant polynomial on $V^*$. By the construction, ${\bf F}$ is also invariant with respect to the action 
of $\SL_m{\times}\SL_k$. As long as $\rk w=k$, $\rk v=m$ and the upper $k{\times}m$-part of $v$ has rank $k$, 
the  $\SL_n{\times}\SL_m{\times}\SL_k$-orbit of $y=v+w$ contains $x=E_m{+}cE_k$ with $c\in\C^{^\times}$. Here ${\bf F}$ is non-zero on $\gt g{+}y$, up to a non-zero scalar $\varphi_x({\bf F})$ is the same as described by Eq.~\eqref{bf-F}.  
Since $n>m>k$, the complement of $\SL_n{\times}\SL_m{\times}\SL_k{\cdot}(E_m{+}\C^{^\times}\!E_k)$
contains no divisors and ${\bf F}$ is not divisible by any non-constant $G$-invariant 
in $\C[V^*]$. This is enough to conclude that $\C[\gt q^*]^Q=\C[V^*]^G[{\bf F}]$, see 
Theorem~\ref{index-1-sur}.

The singular set $\gt q^*_{\rm sing}$ is $\SL_n{\times}\SL_m{\times}\SL_k$-stable. The above discussion shows that it cannot have a component $\gt g{\times}D$, where $D$ is a divisor in $V^*$. Therefore
${\bf p}$ is a power of ${\bf F}$. In view of Theorem~\ref{svob}({\sf i}), 
${\bf p}={\bf F}^{m{-}k{-}1}$. 
\end{proof}

\begin{thm}\label{n-divides}
Suppose that $0<k<m < n$ and $m{-}k$ does not divide $n{-}m$, then $\C[\gt q^*]^Q$ is not a polynomial ring. 
\end{thm}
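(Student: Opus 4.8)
The strategy is to suppose that $\C[\gt q^*]^Q$ is a polynomial ring and push the information about the generic stabiliser as far as possible. A generic stabiliser is $\gt g_x=\gt{sl}_{n-m}{\ltimes}(m-k)\C^{n-m}$, which has the shape treated in Theorem~\ref{th-0} with $(n,m)$ replaced by $(n-m,m-k)$. By Theorem~\ref{svob}({\sf ii}) the algebra $\cS(\gt g_x)^{\gt g_x}$ is then polynomial, so by Theorem~\ref{th-0} together with induction on $n$ the integer $m-k$ divides one of $(n-m)-1$, $n-m$, $(n-m)+1$; as $m-k\nmid n-m$, it divides $(n-m)\mp 1$. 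Writing $n-k=d(m-k)+r$ with $0<r\le m-k$ as in the construction preceding Lemma~\ref{lm-3}, this says precisely that $r\in\{1,m-k-1\}$, and in each of these two cases Theorem~\ref{th-0} further gives that $\gt g_x$ has the ``codim-2'' property, so that $\cS(\gt g_x)^{\gt g_x}$ is freely generated by $m-k$ homogeneous invariants, all of the same degree $\delta=(m-k)\binom{d}{2}+dr$.

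Next I would verify that $\gt q$ itself satisfies the ``codim-2'' condition, i.e. ${\bf p}_{\gt q}=1$. For generic $y\in V^*$, an element $A+y$ with $A\in\gt g$ lies in $\gt q^*_{\rm sing}$ exactly when the image of $A$ in $\gt g_y^*\cong\gt g/\Ann(\gt g_y)$ lies in $(\gt g_y^*)_{\rm sing}$, which is a subset of codimension $\ge 2$ because $\gt g_y\cong\gt g_x$ has the ``codim-2'' property. The locus of non-generic $y$ is the union of the determinantal conditions $\rk v<m$, $\rk w<k$ and $\rk(wv)<k$ on the blocks $v\in m\C^n$, $w\in k(\C^n)^*$ of $y$, and each of these has codimension $\ge 2$ since $m<n$, $k<n$ and $k<m$. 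Hence $\codim\gt q^*_{\rm sing}\ge 2$. Now Theorem~\ref{svob}({\sf i}) gives $\sum_i\deg H_i={\bf b}(\gt q)$ for a system of bi-homogeneous generators $H_i$ of $\C[\gt q^*]^Q$; among them the $\ind\gt g_x=m-k$ mixed generators all have $\gt g$-degree $\delta$, and the remaining $\ind\gt q-\ind\gt g_x=km$ ones are the quadratic pairings freely generating $\C[V^*]^G$. Plugging $\sum_{\text{mixed}}\deg_{\gt g}H_i={\bf b}(\gt g_x)$ into Lemma~\ref{V-deg} yields $\sum_i\deg_V H_i=\dim V$, whence $\sum_{\text{mixed}}\deg_V H_i=\dim V-2km$.

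The contradiction then comes from the explicit invariants ${\bf F}_I$ constructed just before Lemma~\ref{lm-3}. For $r\in\{1,m-k-1\}$ each ${\bf F}_I$ is bi-homogeneous of $\gt g$-degree $\delta$ and $V$-degree $n+2kd$, and the $m-k$ restrictions $\varphi_x({\bf F}_I)$ (for $I\supseteq\{1,\dots,k\}$) freely generate $\cS(\gt g_x)^{\gt g_x}$. One computes $(m-k)(n+2kd)=(\dim V-2km)+2k(m-k-r)$, which is strictly larger than $\dim V-2km$ because $r\le m-k-1$. Hence it suffices to show that $n+2kd$ is the least $V$-degree of a bi-homogeneous $\gt q$-invariant of $\gt g$-degree $\delta$: then each mixed generator has $V$-degree $\ge n+2kd$ and $\sum_{\text{mixed}}\deg_V H_i\ge(m-k)(n+2kd)>\dim V-2km$, contradicting the bookkeeping of the previous paragraph. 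This minimality is proved by refining the weight-space computation in the proof of Lemma~\ref{lm-3}: for the restriction to $\gt g+x$ to be a non-zero element of $\cS^{\delta}(\gt g_x)^{\gt g_x}$ the invariant must carry the weight $d\pi_k$, and this already forces the full degree $dm+(k+r)$ in $v$ and $dk$ in $w$. (Equivalently, the ${\bf F}_I$ with $|I|=k+r$ span an irreducible $\SL_m$-module isomorphic to $\Lambda^{k+r}\C^m$, of dimension $\binom{m}{k+r}>m-k$, which therefore cannot inject into the span of the mixed generators modulo decomposables, and chasing the resulting relations ${\bf F}_I=\sum_j p_{Ij}H_j$ with $p_{Ij}\in\C[V^*]^G$ of positive degree recovers the same conclusion.)

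The main obstacle is precisely this last point — establishing that the ${\bf F}_I$ realise the minimal $V$-degree in their $\gt g$-degree. In the situation of Theorem~\ref{th-0} the group $\SL_n$ has a dense orbit on $V^*$ and $\varphi_x$ is injective, which makes the corresponding analysis automatic; here $\SL_n$ has no dense orbit and $\varphi_x$ has a kernel, so one must genuinely use the weight-theoretic bound. Everything else is routine bookkeeping with Ra{\"i}s' formula~\eqref{ind-sum}, Lemma~\ref{V-deg} and Theorem~\ref{svob}.
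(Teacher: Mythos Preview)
Your overall architecture is reasonable and the bookkeeping is correct, but the proof has a genuine gap precisely where you flag it: the minimality claim that every bi-homogeneous mixed generator has $V$-degree at least $n+2kd$. Your weight-theoretic sketch (``the invariant must carry the weight $d\pi_k$, and this already forces the full degree $dm+(k+r)$ in $v$ and $dk$ in $w$'') is not a proof. A $\gt q$-invariant has total $\SL_n$-weight zero; what happens in the proof of Lemma~\ref{lm-3} is a computation of a specific pairing, and there is no a priori reason why an arbitrary bi-homogeneous invariant with the right $\gt g$-degree and non-vanishing $\varphi_x$ must be built from that same pairing. Worse, your parenthetical alternative actually points in the wrong direction: the inequality $\dim\mathrm{span}\{{\bf F}_I\}=\binom{m}{k+r}>m-k$ together with ${\bf F}_I=\sum_j p_{Ij}H_j$ forces some $p_{Ij}$ to have \emph{positive} degree, i.e.\ some $H_j$ to have $V$-degree \emph{strictly less} than $n+2kd$, which is exactly the opposite of what your degree-counting contradiction needs. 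So the argument, as written, does not close.

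The paper's proof bypasses both the ``codim-2'' verification for $\gt q$ and any minimality statement. It works directly with the $\SL_m$-module structure on the span ${\mathcal H}$ of the mixed generators (one may choose generators so that ${\mathcal H}$ is $\SL_m$-stable). Since $\varphi_x$ is injective on ${\mathcal H}$ and equivariant for the diagonal copy of $\SL_{m-k}\subset G\times\SL_m$ fixing $x=E_m+E_k$, the $(m{-}k)$-dimensional space ${\mathcal H}$ becomes, as an $\SL_{m-k}$-module, isomorphic to $\Lambda^r\C^{m-k}$. But $m-k\ge 2$ (since $m-k\nmid n-m$), and $\SL_m$ has no nontrivial representation of dimension $m-k<m$; hence the $\SL_m$-action on ${\mathcal H}$ would have to be trivial, contradicting the nontrivial $\SL_{m-k}$-action. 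This is the key step your parenthetical is groping toward: the contradiction comes not from comparing the ${\bf F}_I$'s to ${\mathcal H}$, but from the incompatibility of the $\SL_m$- and $\SL_{m-k}$-structures on ${\mathcal H}$ itself.
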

\begin{proof}
The reason for this misfortune is  that $\binom{m}{k{+}r}>\binom{m{-}k}{r}$ for $r<m{-}k$. 
One could prove that each ${\bf F}_I$ must be in the set of generators and thereby show that 
$\C[\gt q^*]^Q$  is not a polynomial ring. But we present a different argument.  

Assume that the ring of symmetric invariants is polynomial. It is bi-graded and $\SL_m$ acts on it preserving the 
bi-grading. Since 
$\SL_m$ is reductive, we can assume that there is a set $\{H_1,\ldots,H_s\}$ of bi-homogeneous mixed generators such that $\cS(\gt q)^{\gt q}=\C[V^*]^G[H_1,\ldots,H_s]$ and the $\C$-linear span ${\mathcal H}:={\rm span}(H_1,\ldots,H_s)$  
is $\SL_m$-stable.  The polynomiality implies that 
a generic stabiliser $\gt g_x=\gt{sl}_{n{-}m}{\ltimes}(m{-}k)\C^{n{-}m}$ has a free algebra of symmetric invariants, see Theorem~\ref{svob}({\sf ii}), and by the same statement $\varphi_x$ is surjective. 
This means that $r$ is either $1$ or $m{-}k{-}1$, see Theorem~\ref{th-0},   $s=m{-}k$, and 
$\varphi_x$ is injective on ${\mathcal H}$. 
 Taking our favourite (generic) $x=E_m{+}E_k$, we see that there is $\SL_{m{-}k}$ embedded diagonally into 
$G{\times}\SL_m$, which acts on $\varphi_x({\mathcal H})$ as on 
$\Lambda^r\C^{m{-}k}$.  The group $\SL_{m{-}k}$ acts on ${\mathcal H}$ in the same way. 
Since $m{-}k$ does not divide $n{-}m$, we have $m{-}k\ge 2$. 
The group $\SL_m$ cannot act on 
a irreducible module $\Lambda^r\C^{m{-}k}$ of its non-trivial subgroup $\SL_{m{-}k}$, this is especially obvious in our two cases of 
interest, $r=1$ and $r=m{-}k{-}1$. A contradiction. 
\end{proof}

\begin{conj}
It is very probable that $\C[\gt q^*]^{\gt q}=\C[V^*]^G[\{{\bf F}_I\}]$ for all $n>m>k\ge 1$. 
\end{conj}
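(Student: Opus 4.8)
The plan is to adapt the proof of Theorem~\ref{divides} — which routes through the restriction map $\varphi_x$ to the generic stabiliser $\gt g_x\cong\gt{sl}_{n-m}\ltimes(m{-}k)\C^{n-m}$ — to the case $\ind\gt g_x>1$. Write $n{-}k=d(m{-}k)+r$ with $0<r\le m{-}k$, so that $\gt g_x\cong\gt{sl}_N\ltimes(m{-}k)\C^N$ with $N=n{-}m=(d{-}1)(m{-}k)+r$; here $r=m{-}k$ exactly when $m{-}k$ divides $n{-}m$, so for $r=m{-}k$ the statement is Theorem~\ref{divides} and the new content is the case $r<m{-}k$. The inclusion $\C[V^*]^G[\{{\bf F}_I\}]\subseteq\C[\gt q^*]^{\gt q}$ is immediate from Lemma~\ref{lm-3} and $\C[V^*]^G\subseteq\C[\gt q^*]^{\gt q}$; everything is about the reverse one.

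The first step is to show that $\varphi_x(\{{\bf F}_I\})$ generates $\cS(\gt g_x)^{\gt g_x}$ for generic $x$. By Eq.~\eqref{bf-F} and its analogue for ${\bf F}_I$, together with the computation in the proof of Lemma~\ref{lm-3}, the non-zero restrictions $\varphi_x({\bf F}_I)$ are precisely the polynomials $F_J$ built for $\gt g_x$ as in Eq.~\eqref{eq-F-1} — with $\gt{sl}_N$, $(m{-}k)\C^N$, exponent $q'=d{-}1$, and subsets $J\subseteq\{1,\dots,m{-}k\}$ of size $r$ — and there are $\binom{m-k}{r}$ of them, matching the count in that proof. By the Remark after Theorem~\ref{th-0} (and the duality $\C^N\leftrightarrow(\C^N)^*$) these $F_J$ generate $\cS(\gt g_x)^{\gt g_x}$ whenever $m{-}k<n{-}m$; the remaining possibility $m{-}k\ge n{-}m$ forces $d=1$, $N=r$, so $\gt g_x$ has a trivial generic stabiliser (Example~\ref{ex-0}) and $\cS(\gt g_x)^{\gt g_x}=\C[(m{-}k)\C^N]^{\SL_N}$ is classically generated by $N{\times}N$ minors — again among the $\varphi_x({\bf F}_I)$. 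Hence $\varphi_x\bigl(\C[V^*]^G[\{{\bf F}_I\}]\bigr)=\cS(\gt g_x)^{\gt g_x}$, and since $\varphi_x\bigl(\C[\gt q^*]^{\gt q}\bigr)\subseteq\cS(\gt g_x)^{G_x}=\cS(\gt g_x)^{\gt g_x}$ by \cite[Lemma~2.5]{z2} ($G_x$ is connected), the map $\varphi_x$ is surjective for generic $x$. Note that the $G$-action on $V^*$ is not stable here ($\gt g_x$ is non-reductive once $k>0$), so this surjectivity does not come from \cite[Theorem~2.8]{z2}.

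The second step is to descend from $\gt g_x$ back to $\gt q$. Put $R=\C[V^*]^G[\{{\bf F}_I\}]$ and localise: $\cb:=R\otimes_{\C[V^*]^G}\C(V^*)^G=\C(V^*)^G[\{{\bf F}_I\}]$ and $\ca:=\C[\gt q^*]^{\gt q}\otimes_{\C[V^*]^G}\C(V^*)^G$, both sitting in $\cS(\gt g)\otimes_{\C}\C(V^*)^G$. By Ra{\"i}s' formula~\eqref{ind-sum}, ${\rm tr.deg}_{\C(V^*)^G}\ca=\ind\gt g_x={\rm tr.deg}_{\C(V^*)^G}\cb$, so $\ca$ is algebraic over $\cb$, and by the first step $\varphi_u$ carries $\ca$ into $\cS(\gt g_u)^{\gt g_u}=\varphi_u(\cb)$ for generic $u\in V^*$. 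Granting $\ca=\cb$, the theorem follows: $\ca\cap\C[\gt q^*]=\C[\gt q^*]^{\gt q}$ tautologically; $\C[V^*]^G$ is a polynomial ring and $G=\SL_n$ is semisimple, so $\C(V^*)^G={\rm Quot}\,\C[V^*]^G$ and there are no proper semi-invariants; and, exactly as in the proof of Theorem~\ref{divides}, $L=\SL_n\times\SL_m\times\SL_k$ acts on $V^*$ with an open orbit whose complement carries no divisor, so the ${\bf F}_I$ have no common non-constant divisor in $\C[V^*]^G$ and $\cb\cap\C[\gt q^*]=R$.

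The real obstacle is the equality $\ca=\cb$ in the new case $m{-}k\nmid n{-}m$: there $\cS(\gt g_x)^{\gt g_x}$ is not a polynomial ring (Theorem~\ref{th-0}), the $\varphi_u({\bf F}_I)$ satisfy non-trivial algebraic relations, $\cb$ need not be algebraically closed in $\cS(\gt g)\otimes\C(V^*)^G$, and the Skryabin-type argument underlying Lemma~\ref{index-1-sur} (and Theorem~\ref{divides}) no longer applies. Three routes seem possible. (a) Prove that $\C[\gt q^*]^{\gt q}$ is finitely generated over $\C[V^*]^G$ — hence flat over a dense open $U'\subseteq\mathrm{Spec}\,\C[V^*]^G$ by generic flatness — identify its generic fibre over $\C[V^*]^G$ with $\cS(\gt g_x)^{\gt g_x}$ in the spirit of \cite[Lemma~2.5]{z2}, and deduce $\ca=\cb$ by a support argument for the cokernel $\ca/\cb$, using the fibrewise surjectivity from the first step. (b) Realise $\gt q$ as a Lie subalgebra of a larger semi-direct product with explicitly known symmetric invariants, together with an identity $\cS(\gt q)^{\gt q}=\cS(\gt q)\cap\cS(\widehat{\gt q})^{\widehat{\gt q}}$ of the kind used for $m=k$ in the proof of Theorem~\ref{m=k}. (c) A direct bi-degree induction: given a bi-homogeneous $\gt q$-invariant $H$, express $\varphi_x(H)$ as a homogeneous polynomial in the $\varphi_x({\bf F}_I)$, lift its coefficients to $\C(V^*)^G$, and subtract the corresponding element of $R$; without freeness such an expression is not unique, so the point is to choose it depending algebraically on $x$. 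Supplying the missing finite-generation statement for $\C[\gt q^*]^{\gt q}$ over $\C[V^*]^G$ — automatic in the already-settled polynomial cases but open here — is, I believe, the heart of the matter, and routes (a) and (c) both hinge on it.
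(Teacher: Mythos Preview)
The statement you are attempting to prove is stated in the paper as a \emph{Conjecture} and is left open; there is no proof in the paper to compare your proposal against. Your write-up is, appropriately, not a proof but a programme: you isolate the equality $\ca=\cb$ as ``the real obstacle'' and sketch three possible attacks without completing any of them. That is an honest summary of where the difficulty lies.

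Two comments on the partial progress. Your first step --- that the non-zero restrictions $\varphi_x({\bf F}_I)$ coincide with the $F_J$ of $\gt g_x$ and hence generate $\cS(\gt g_x)^{\gt g_x}$ --- is correct and is exactly what the computations in the proof of Lemma~\ref{lm-3} and the Remark after Theorem~\ref{th-0} give you. Your second step, however, contains a further gap that you pass over too quickly: the assertion $\cb\cap\C[\gt q^*]=R$ is \emph{not} ``exactly as in the proof of Theorem~\ref{divides}''. In Lemma~\ref{index-1-sur} and Theorem~\ref{divides} there is a single generator $F$, so every element of $\C(V^*)^G[F]$ has a unique expansion $\sum c_iF^i$, and indivisibility of $F$ by non-constant elements of $\C[V^*]^G$ forces $c_i\in\C[V^*]^G$. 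Once the ${\bf F}_I$ are algebraically dependent over $\C(V^*)^G$ --- which is precisely the situation when $m{-}k\nmid n{-}m$, since there are $\binom{m}{k{+}r}>\binom{m{-}k}{r}=\ind\gt g_x$ of them --- such an expansion is no longer unique, and the absence of a common $\C[V^*]^G$-divisor does not by itself rule out elements of $\cb$ that are polynomial on $\gt q^*$ but lie outside $R$. So even granting $\ca=\cb$, the descent to $R$ is a second genuine issue, tied (as you suspect for $\ca=\cb$) to questions of normality or finite generation of $R$ over $\C[V^*]^G$.
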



\begin{thebibliography}{AD31}

\bibitem[AG]{devochki}
{\sc O.M.~Adamovich,  E.O.~Golovina},
Simple linear Lie groups having a free algebra of invariants,
{\it Selecta Math. Sov.},
{\bf 3}\,(1984), 183--220; originally published in {\it Voprosy teorii grupp i gomologicheskoi algebry},
Yaroslavl, 1979, 3--41 (in Russian).  

\bibitem[H]{Helg}
{\sc S.\,Helgason},
Some results on invariant differential operators on symmetric spaces,
{\it Amer. J. Math.}, {\bf 114}\,(1992), no.\,4, 789--811.

\bibitem[JSh]{JSh}
{\sc A.~Joseph, \sc D.~Shafrir}, 
Polynomiality of invariants, unimodularity and adapted pairs,  
{\it Transformation Groups},  {\bf 15}\,(2010), no.\,4, 851--882.

\bibitem[P07]{Dima06}
{\sc D.~Panyushev},  On the coadjoint representation of $\mathbb Z_2$-contractions 
of reductive Lie algebras, Adv. Math., {\bf 213}\,(2007), no.\,1, 380--404. 

\bibitem[PPY]{ppy}
{\sc D.~Panyushev, A.~Premet, O.~Yakimova}, On symmetric invariants of centralisers
in reductive Lie algebras, {\it J. Algebra},
{\bf 313}\,(2007), 343--391.

\bibitem[Ra]{r}
{\sc M.~Ra{\"i}s}, 
L'indice des produits semi-directs $E\times_{\rho}\gt g$, \ 
{\it C. R. Acad. Sci. Paris Ser. A}, {\bf 287}\,(1978),
195--197.

\bibitem[Sch]{gerry}
{\sc G.W. Schwarz}, Representations of simple Lie groups with 
regular rings of invariants, {\it Invent. Math.}, 
{\bf 49}\,(1978), 167--191. 

\bibitem[VP]{VP} 
{\sc E.B.~Vinberg, V.L.~Popov},
Invariant theory, In: {\it Algebraic Geometry IV},
Itogi Nauki i Tekhniki, pages 137--314. Akad. Nauk SSSR Vsesoyuz. Inst. Nauchn. i Tekhn. Inform.,
Moscow, 1989; English translation: Encyclopaedia Math. Sci., vol.~55, Springer, Berlin, 1994.

\bibitem[Y14]{jems} {\sc O.~Yakimova},
One-parameter contractions of Lie-Poisson brackets, 
{\it J. Eur. Math. Soc.},  {\bf 16}\,(2014), 387--407. 

\bibitem[Y17]{z2}  {\sc O.~Yakimova},
Symmetric invariants of $\mathbb Z_2$-contractions and other semi-direct products, 
{\it Int.  Math. Res. Notices}, (2017) 2017 (6): 1674--1716.
\end{thebibliography}
\end{document}